\DeclareMathOperator\supp{Supp}
\DeclareMathOperator\lk{lk}
\newtheorem{lemma}{Lemma}[section]
\newtheorem{thm}[lemma]{Theorem}
\newtheorem{prop}[lemma]{Proposition}
\newtheorem{cor}[lemma]{Corollary}
\newtheorem{conj}[lemma]{Conjecture}
\newtheorem*{prop*}{Proposition}
\newtheorem{prop_intro}{Proposition}
\newtheorem{thm_intro}[prop_intro]{Theorem}
\newtheorem{cor_intro}[prop_intro]{Corollary}
\theoremstyle{definition}
\newtheorem{rem}[lemma]{Remark}
\theoremstyle{definition}
\newtheoremstyle{citing}
  {3pt}
  {3pt}
  {\itshape}
  {}
  {\bfseries}
  {}
  {.5em}
  {\thmnote{#3}}
\theoremstyle{citing}
\definecolor{darkgreen}{cmyk}{1,0,1,.2}
\DeclareMathOperator{\minvol}{MinVol}
\DeclareMathOperator{\vol}{Vol}
\DeclareMathOperator{\inte}{int}
\newcommand{\lf}{\text{lf}}
\newcommand{\R} {\ensuremath {\mathbb{R}}}
\newcommand{\matZ} {\ensuremath {\mathbb{Z}}}
\renewcommand{\phi}{\varphi}
\begin{document}

\title[]{The simplicial volume of contractible 3-manifolds}

\author[Giuseppe Bargagnati]{Giuseppe Bargagnati}
\address{Dipartimento di Matematica, Universit\`a di Pisa, Italy}
\email{g.bargagnati@studenti.unipi.it}

\subjclass{57K10 (primary); 53C23, 57K30, 57M25, 57M35, 57N65}
\keywords{bounded cohomology, minimal volume, Mazur manifolds, graph links, hyperbolic manifolds, graph manifolds}

\author[]{Roberto Frigerio}
\address{Dipartimento di Matematica, Universit\`a di Pisa, Italy}
\email{roberto.frigerio@unipi.it}

\date{\today}

\keywords{}
\begin{abstract}
We show that the simplicial volume of a contractible $3$-manifold not homeomorphic to $\mathbb{R}^3$ is infinite. As a consequence, the Euclidean space may be characterized
as the unique contractible $3$-manifold with vanishing minimal volume, or as the unique contractible $3$-manifold supporting
a complete finite-volume Riemannian metric with Ricci curvature uniformly bounded from below. On the contrary, we show that in every dimension $n\geq 4$ there 
exists a contractible $n$-manifold  with vanishing simplicial volume not homeomorphic to  $\mathbb{R}^n$.
We also  compute the spectrum of the simplicial volume of irreducible open $3$-manifolds. 
\end{abstract}

\maketitle

\section{Introduction}
The simplicial volume is a homotopy invariant of manifolds introduced by Gromov in his pioneering paper~\cite{Gromov} (see Section~\ref{simplicial:sec} for the precise definition).
As highlighted by Gromov himself,
its behaviour is deeply related
to the geometric structures that a manifold can carry. 
For compact manifolds, the study of the simplicial volume has often exploited the dual theory of bounded cohomology,
which however seems to be a bit less effective 
 for the computation of  the simplicial volume of \emph{open} manifolds
 (but see~\cite{Gromov, Loeh, FriMoM} for several applications of bounded cohomology to open manifolds).
This is probably one of the reasons why the investigation of the simplicial volume 
is apparently even more challenging
for
open manifolds than for closed ones. 

Vanishing and finiteness results for the simplicial volume of open manifolds are obtained in~\cite{Gromov,Loeh,FriMoM}, and the exact value of the simplicial volume of some open locally symmetric spaces
is computed in~\cite{Loh-Sauer, Loh-Sauer2,KimKim,MichelleKim}. Building on~\cite{HL}, recently Heuer and L\"oh proved that, for any fixed $n\geq 4$ and any $\alpha\in\mathbb{R}$,
there exists an $n$-dimensional open manifold whose simplicial volume is equal to $\alpha$~\cite{HL2}. In dimension 3, as stated in~\cite{HL2}, 
 the computation of all the possible simplicial volumes of open manifolds ``seems to be fairly tricky'',
  since the structure of non-compact 3-manifolds can get quite complicated.

Henceforth, by ``manifold'' we will understand a topological (second countable) manifold (recall however that, in dimension 3, every manifold admits a unique 
smooth structure up to diffeomorphism).  A manifold is closed if it is connected, compact and without boundary, and open if it is  connected, non-compact and without boundary.
Every manifold will be assumed to be connected and orientable (the simplicial volume may be defined also for non-orientable manifolds by passing 
to the $2$-sheeted orientable cover, and for disconnected manifolds by summing the simplicial volumes of the connected components). 
If $M$ is a manifold without boundary, we denote by $\|M\|$ the simplicial volume of $M$.

\subsection*{Contractible $3$-manifolds}
The main result of this paper completely describes the simplicial volume of \emph{conctractible} $3$-manifolds, showing that the Euclidean space may be characterized as the unique 
contractible $3$-manifold whose simplicial volume vanishes:

\begin{thm_intro}\label{main:thm}
Let $M$ be a contractible 3-manifold. Then $\|M\|=0$ if $M$ is homeomorphic to $\mathbb{R}^3$, and $\|M\|=+\infty$ otherwise.
\end{thm_intro}

For example, the famous Whitehead manifold $W$~\cite{White} has infinite simplicial volume. Gabai showed in~\cite{GabaiW} that $W$ admits a decomposition
 into open subsets $A,B$ such that each of $A$, $B$ and $A\cap B$ is homeomorphic to a copy of the Euclidean space. 
 This property was then called ``double 3-space property'' in~\cite{GRW}, where it is shown that there exists (up to homeomorphism) uncountably many 
 contractible $3$-manifolds with the the double 3-space property, as well as uncountably many without this property.
 Our result implies the (surprising at first sight) fact that,
even if $\|\mathbb{R}^3\|=0$, the simplicial volume of a manifold $M$ with the double 3-space property does not  vanish, unless $M\cong \mathbb{R}^3$. We refer the reader e.g.~to~\cite{Mcmillan1,Mcmillan2}
for a survey on the topology of contractible $3$-manifolds.

In order to obtain Theorem~\ref{main:thm} we prove two results which may be of independent interest: in Proposition~\ref{amenable:prop}
we find sufficient conditions under which the isomorphism provided by the Excision Theorem for singular homology is isometric with respect to the $\ell^1$-norm,
while in Theorem~\ref{lkgraphlink} we show that the components of a non-split $2$-component link  $L\subseteq S^3$ containing a trivial knot and
such that $\|S^3\setminus L\|=0$ must be algebraically linked.

As already mentioned, Gromov himself pointed out that the simplicial volume may be exploited to obtain lower bounds on differential-geometric invariants of Riemannian manifolds.
Let us list some consequences of Theorem~\ref{main:thm} in this spirit.
If $M$ is a smooth manifold, the \emph{minimal volume} $\minvol (M)$ of $M$ is defined as 
the greatest lower bound of the  volumes $\vol (M,g)$ of $M$ with respect to complete Riemannian metrics $g$ whose sectional curvature $K(g)$ satisfies $-1\leq K(g)\leq 1$.
It is known that, in higher dimension, the minimal volume may distinguish distinct smooth structures on the same topological manifold~\cite{bess}: however, in dimension 3 every topological manifold
supports a smooth structure, and homeomorphic smooth manifolds are in fact diffeomorphic, hence the minimal volume is an invariant of topological $3$-manifolds.  
A fundamental result by Gromov ensures that the simplicial volume provides a lower bound on the minimal volume (up to a universal constant only depending on the dimension of the manifold): indeed,
if $M$ is an open $n$-dimensional smooth manifold, then
$$
\|M\|\leq (n-1)! n^n \minvol(M)
$$
(see~\cite[page 12]{Gromov}). 
Moreover, 
Gromov himself proved that the minimal volume of $\mathbb{R}^3$ vanishes~\cite[Appendix B]{Gromov} (see also~\cite[Examples 1.4,1.5,1.6]{CheeGro}, where Cheeger and Gromov proved
that $\minvol(\mathbb{R}^n)=0$ for every $n\geq 3$). Thus Theorem~\ref{main:thm} implies the following corollary, which provides a characterization of $\mathbb{R}^3$ among contractible $3$-manifolds in terms of the minimal volume.

\begin{cor_intro}\label{minimalvolume:cor}
 Let $M$ be a contractible 3-manifold. Then $\minvol (M)=0$ if $M$ is homeomorphic to $\mathbb{R}^3$, and $\minvol (M)=+\infty$ otherwise.
\end{cor_intro}

The Euclidean space may be  characterized also as the unique contractible $3$-manifold admitting a complete finite-volume Riemannian metric with Ricci curvature uniformly bounded from below. 
In fact, Gromov's \emph{Main Inequality}~\cite[page 12]{Gromov} asserts that, for every complete Riemannian metric $g$ on the $n$-dimensional manifold $M$,  if 
$$\text{Ricci}(g)\geq -\frac{1}{n-1} g\ ,$$
then
$$
\|M\|\leq n! \vol(M)\ .
$$
Thus Theorem~\ref{main:thm} implies the following:

\begin{cor_intro}\label{ricci:cor}
Let $M$ be a  contractible 3-manifold. If $M$ supports a complete finite-volume Riemannian metric  with Ricci curvature uniformly bounded from below, then $M$ is 
homeomorphic to $\mathbb{R}^3$.
\end{cor_intro}
\begin{proof}
If $M$ supports a complete Riemannian metric $g$ with $\text{Ricci}(g)\geq -k g$, $k>0$, up to rescaling we may assume that $k=1/(n-1)$. Then
Gromov's Main Inequality applies to show that, if $\|M\|=+\infty$, then also the volume of $g$ should be infinite. The conclusion now follows from Theorem~\ref{main:thm}.
\end{proof}

For other characterizations of the Euclidean space among contractible $3$-manifolds in terms of curvature we address the reader e.g.~to~\cite{Wein2},
where it is proved that a contractible $3$-manifold admitting a complete metric with uniformly positive scalar curvature must be diffeomorphic to $\mathbb{R}^3$,
or to~\cite{Wang}, where the same result is proved under the weaker hypothesis that  scalar curvature  be positive
with slower-than-quadratic decay, or to~\cite{Zhu}, where the author shows that a complete, noncompact three-manifold with nonnegative Ricci curvature is diffeomorphic to
the  Euclidean space if the Ricci curvature is strictly positive at some point. Very recently, these results were superseded by~\cite{JWang}, where Wang proved that any complete contractible 3-manifold with non-negative scalar curvature is homeomorphic to $\mathbb{R}^3$.

\subsection*{The higher dimensional case}
Theorem~\ref{main:thm} does not hold in dimension $\geq 4$. In fact, in Section~\ref{higher:sec} we prove the following:

\begin{thm_intro}\label{higher:dim:thm}
Let $n\geq 4$. Then, there exists a contractible $n$-manifold which is not homeomorphic to $\mathbb{R}^n$ and is such that $\|M\|=0$.
Moreover, $M$ can be chosen to be smooth.
\end{thm_intro}

\subsection*{The simplicial volume of irreducible $3$-manifolds}
Following~\cite{HL2}, we denote by $SV(3)$ the set of simplicial volumes of \emph{closed} $3$-manifolds, and by $SV^{\text{lf}}(3)$ the set of simplicial
volumes of all $3$-manifolds without boundary. Recall that an open manifold is \emph{tame} if it is homeomorphic to the internal part of a compact manifold with boundary,
and denote by $SV^{\text{lf}}_{\text{tame}}(3)$ the set of simplicial volumes of tame open $3$-manifolds. It is known that $SV(3)$ is countable
and exhibits a gap at $0$, i.e.~there exists $\varepsilon>0$ such that $[0,\varepsilon)\cap SV(3)=\{0\}$ (see e.g~\cite[Corollary 7.8]{frigerio:book}; in fact, one may choose
$\varepsilon=\vol(M_0)/v_3\approx 0.928\dots$, where $M_0$ is the Weeks manifold, i.e.~the 
complete orientable hyperbolic 3-manifold of smallest volume~\cite{GMM}, and $v_3$ is the volume of the regular ideal tetrahedron in hyperbolic
$3$-space). Moreover, it is shown in~\cite{HL2} that $SV^{\text{lf}}_{\text{tame}}(3)=SV(3)\cup\{\infty\}$. However,
the set of simplicial volumes of open $3$-manifolds is not understood completely yet (see~\cite[Question 1.3]{HL2}).
Here we propose the following:

\begin{conj}\label{nontame:conj}
$SV^{\text{lf}} (3)=SV(3)\cup \{\infty\}$.
\end{conj}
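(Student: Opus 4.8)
The plan is to deduce the general case from the computation of the spectrum of \emph{irreducible} open $3$-manifolds carried out in this paper, by an additivity argument along $2$-spheres. The inclusion $SV(3)\cup\{\infty\}\subseteq SV^{\text{lf}}(3)$ is immediate, since closed manifolds are without boundary and $\infty\in SV^{\text{lf}}(3)$ already by Theorem~\ref{main:thm} (for instance $\|W\|=\infty$ for the Whitehead manifold). So the whole point is the reverse inclusion: if $M$ is an open $3$-manifold with $\|M\|<\infty$, one must show $\|M\|\in SV(3)$. When $M$ is irreducible this is exactly what the paper proves, so one may assume $M$ is reducible.

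For reducible $M$ the idea is to produce a \emph{locally finite} family $\calS=\{S_j\}_{j\in J}$ of pairwise disjoint, pairwise non-parallel essential $2$-spheres such that cutting $M$ along $\calS$ and capping the resulting spherical boundary components with $3$-balls yields a disjoint union of connected, orientable, irreducible $3$-manifolds $\{N_i\}_{i\in I}$, each closed or open, together with at most countably many copies of $S^3$. One would build $\calS$ from an exhaustion of $M$ by compact submanifolds with boundary, using Kneser's finiteness theorem inside each compact piece and then passing to a limit. The delicate point here is that open $3$-manifolds need not admit a classical Kneser--Milnor prime decomposition, so one either has to control this limiting process directly or bypass the decomposition altogether in favour of an ad hoc $\ell^1$-chain construction.

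The second ingredient is a locally finite additivity statement: since $2$-spheres have trivial, hence amenable, fundamental group, one expects $\|M\|=\sum_{i\in I}\|N_i\|$ (where each summand is the locally finite simplicial volume of $N_i$), and moreover that capping a spherical boundary component with a ball does not change the simplicial volume. This should follow from a locally finite version of Gromov's additivity under amenable gluings, in the spirit of Proposition~\ref{amenable:prop} together with the $\ell^1$-homology formalism for non-compact manifolds. Granting it, the gap at $0$ in $SV(3)$ recalled in the introduction forces each $\|N_i\|$ to lie in $\{0\}\cup[\varepsilon,\infty]$, so from $\|M\|<\infty$ only finitely many summands are non-zero, say $N_1,\dots,N_k$, and $\|M\|=\sum_{i=1}^k\|N_i\|$. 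Each $N_i$ is irreducible and has finite simplicial volume, hence $\|N_i\|\in SV(3)$ either trivially (if $N_i$ is closed) or by the irreducible case (if $N_i$ is open); finally $SV(3)$ is closed under finite sums, because $\|M_a\#M_b\|=\|M_a\|+\|M_b\|$ in dimension $3$, so $\|M\|\in SV(3)$, as desired.

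I expect the main obstacle to be exactly the passage from the tame to the wild (non-tame) setting, and it is two-sided. On the topological side it is the construction of the locally finite sphere decomposition for an arbitrary, possibly wildly-ended open $3$-manifold, in the absence of a prime decomposition theorem. On the $\ell^1$/bounded-cohomology side it is proving additivity of the \emph{locally finite} simplicial volume under gluing along an infinite, locally finite family of $2$-spheres, which requires a genuinely non-compact analogue of the amenable gluing and mapping theorems rather than the compact statements that suffice for tame manifolds (as used by Heuer--L\"oh). A secondary point to be checked is that the irreducible summands produced by the decomposition are again open $3$-manifolds in the literal sense (connected, second countable, boundaryless), so that the irreducible case genuinely applies.
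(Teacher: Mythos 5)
The statement you are addressing is stated in the paper as a \emph{conjecture}, not a theorem: the authors only prove the partial result $SV^{\text{lf}}_{\text{irr}}(3)\subseteq SV(3)\cup\{\infty\}$ (Theorem~\ref{spectrum:thm}) and explicitly leave Conjecture~\ref{nontame:conj} open, so there is no proof in the paper to compare yours against. Your proposal does not close the gap. The decisive missing step is the first one: you assume that every reducible open $3$-manifold admits a locally finite family of disjoint essential $2$-spheres cutting it into irreducible pieces (plus capped-off copies of $S^3$). This is precisely what fails in general: as the paper points out immediately after stating the conjecture, there exist open $3$-manifolds which admit \emph{no} decomposition along spheres into irreducible pieces (the examples of Scott and of Maillot cited there). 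Your suggested construction --- Kneser finiteness inside each term of an exhaustion followed by a passage to the limit --- does not rescue this: the sphere systems chosen in successive compact pieces need not be compatible, the number of essential, pairwise non-parallel spheres meeting a fixed compact set can grow without bound, and the limiting family need not be locally finite, nor need its complementary components be irreducible. So the reduction to the irreducible case is simply not available for arbitrary open $M$, and ``one may assume $M$ is reducible'' does not lead anywhere without it.

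Your second ingredient (additivity of the locally finite simplicial volume over an infinite, locally finite family of spheres, and invariance under capping spherical boundary components) is also not established in the paper, although the authors indicate that Theorem~\ref{spectrum:thm} can be combined with such an additivity statement \emph{when} a sphere decomposition exists; that part of your outline is plausible and in the spirit of Proposition~\ref{amenable:prop}, and the gap-at-zero argument you give for reducing to finitely many non-trivial summands is fine. But since the topological input fails for a genuinely non-empty class of open $3$-manifolds, the proposal does not prove the conjecture; at best it recovers the remark the paper already makes about those $M$ that do admit a decomposition along spheres into irreducible pieces. To your credit, you correctly identified the construction of the sphere system as the main obstacle; the point is that it is not merely ``delicate'' but known to be impossible in general, so a proof of the conjecture would have to bypass sphere decompositions altogether.
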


Recall that a $3$-manifold $M$ is \emph{irreducible} if every embedded $2$-sphere $S\subseteq M$ bounds a closed $3$-ball $B\subseteq M$.
We denote 
by $SV^{\text{lf}}_{\text{irr}}(3)$ the set of
simplicial volumes of (open or closed) irreducible $3$-manifolds. 
As a partial step towards the proof of Conjecture~\ref{nontame:conj} we prove here the following:

\begin{thm_intro}\label{spectrum:thm}
$SV^{\text{lf}}_{\text{irr}}(3)\subseteq SV(3)\cup \{\infty\}$.
\end{thm_intro}

Theorem~\ref{spectrum:thm} may be exploited to show that $\|M\|\in SV(3)\cup\{\infty\}$ for every open $3$-manifold which admits a decomposition along spheres 
into a (possibly infinite) union of (possibily non-compact) irreducible $3$-manifolds. However, there are examples of open $3$-manifolds which do
not admit such a decomposition~\cite{Scottloc,Maillot}.

\subsection*{Plan of the paper} In Section~\ref{simplicial:sec} we collect some basic facts about simplicial volume and bounded cohomology, and we prove 
Proposition~\ref{amenable:prop} on the behaviour of the $\ell^1$-norm under excision. Section~\ref{contractible:sec} is devoted to the study of the geometry
of open $3$-manifolds. A particular attention is payed to the r\^ole played by exhaustions, and to the case of contractible manifolds. In Section~\ref{link:sec}
we prove Theorem~\ref{lkgraphlink} about $2$-component links with vanishing simplicial volume, and in Section~\ref{proof:sec} we put together the results of the previous
sections to prove Theorem~\ref{main:thm}. The higher dimensional case (Theorem~\ref{higher:dim:thm})  is addressed in Section~\ref{higher:sec}, while in the last section
we prove Theorem~\ref{spectrum:thm} on the spectrum of the simplicial volume of open irreducible $3$-manifolds.

\subsection*{Acknowledgements} The authors thank Bruno Martelli for useful conversations.

\section{Simplicial volume and bounded cohomology}\label{simplicial:sec}
Let $X$ be a topological space, and let $R=\matZ,\R$. 
For every $n\in\mathbb{N}$, we denote by
$S_n (X)$ the set of singular $n$--simplices
in $X$. 

A subset $A \subset S_n(X)$ is \emph{locally finite} if any compact set $K \subseteq X$ intersects the image of only finitely many singular simplices of $A$. 
Following \cite[Chapter 5.1]{Lothesis}, a (possibly infinite) singular $n$-chain on $X$ (with coefficients in $R$) is a formal sum $\sum_{\sigma\in S_n(X)} a_\sigma \sigma$, where 
every $a_\sigma$ is an element of $R$. We say that such a chain
is \emph{locally finite} if the set $\{\sigma\in S_n(X)\, |\, a_\sigma\neq 0\}$ is locally finite, and we denote by $C_n^{\lf}(X;R)$ the $R$-module of locally finite chains on $X$.

The usual boundary operator on finite chains may be extended to the complex of locally finite chains, so it makes sense
to define the locally finite homology $H^{\lf}_*(X;R)$ of $X$ as the homology of the complex $C_*^{\lf}(X;R)$. Of course, if $X$ is compact,
then locally finite chains are finite, and we recover the usual complex $C_*(X;R)$ of singular chains with values in $X$, and the usual singular homology
module $H_*(X;R)$.


Henceforth, unless otherwise stated, 
when we omit the coefficients from our notation we understand that $R=\mathbb{R}$.
All the (bounded) (co)homology modules will be understood with real coefficients. For example, we will denote the vector spaces $C_*^\lf(X;\R)$ and $ H^{\lf}_*(X;\R)$ simply by $C_*^\lf(X)$ and $H_*^\lf(X)$. 

The space $C_n^\lf(X)$ may be endowed with the $\ell^1$-norm defined by
$$
\left\|\sum_{\sigma \in S_n(X)} a_\sigma \sigma\right\|_1=\sum_{\sigma \in S_n(X)} |a_\sigma|\ \in\ [0,+\infty]\ .
$$
This norm induces
an $\ell^1$-seminorm (with values in $[0,+\infty]$) on $H_*^\lf(X)$, which will still be denoted by $\|\cdot\|_1$.

It is a standard result of algebraic topology (see for instance \cite[Theorem~5.4]{Lothesis}) that, if $M$ is an $n$-dimensional connected and oriented manifold, then $H^{\lf}_n(M;\mathbb{Z})\cong \mathbb{Z}$ is generated by a preferred
element $[M]_\mathbb{Z}\in H^{\lf}_n(M;\mathbb{Z})$, called the \emph{fundamental class} of $X$. Under the obvious change of coefficients homomorphism
$H^{\lf}_*(X;\mathbb{Z})\to H^{\lf}_*(M)$, the element $[M]_\mathbb{Z}$ is taken  to the \emph{real} fundamental class $[M]\in H^{\lf}_n(M)$ of $M$.

If  $M$ is an oriented $n$-manifold, the \emph{simplicial volume} of $M$ is $$\|M \| \coloneqq \| [M] \|_{1}\ .$$ 


We will be also interested in the case when $M$ is compact with boundary. In this case, the $\ell^1$-norm on $C_n(M)$ induces an $\ell^1$-norm on the relative
singular chain module $C_n(M,\partial M)$, which in turn induces a seminorm on $H_n(M,\partial M)$. If $M$ is connected, oriented and $n$-dimensional,
then $H_n(M,\partial M;\mathbb{Z})\cong \mathbb{Z}$, and the positive generator $[M,\partial M]_\mathbb{Z}\in H_n(M,\partial M;\mathbb{Z})\cong \mathbb{Z}$
is sent by the change of coefficient map to the \emph{relative fundamental class} $[M,\partial M]\in H_n(M,\partial M;\mathbb{R})= H_n(M,\partial M)$. Just as in the case without boundary, the simplicial volume of $M$ is then $$\|M,\partial M\|=\|[M,\partial M]\|_1\ .$$

In the following lemmas we collect some elementary properties of the simplicial volume which will prove useful later. 

\begin{lemma}\label{general1:lemma}
Let $M$ be a compact $3$-manifold with boundary.
If $\|M,\partial M\|=0$, then for every $\varepsilon>0$ there exists a fundamental cycle $z\in C_3(M,\partial M)$ such that
$\|z\|_1\leq \varepsilon$ and $\|\partial z\|_1\leq \varepsilon$.\end{lemma}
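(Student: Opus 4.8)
The plan is to extract from the hypothesis $\|M,\partial M\|=0$ a relative fundamental cycle of small $\ell^1$-norm, and then to observe that the simplicial boundary operator on $3$-chains is automatically $4$-Lipschitz for the $\ell^1$-norm, since every singular $3$-simplex has exactly four $2$-dimensional faces. This immediately upgrades the smallness of $\|z\|_1$ to the simultaneous smallness of $\|\partial z\|_1$.

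In more detail, first I would record that, since $\partial M$ is $2$-dimensional, $C_3(\partial M)=0$, so the relative chain module $C_3(M,\partial M)=C_3(M)/C_3(\partial M)$ coincides with $C_3(M)$ and the quotient $\ell^1$-norm on it coincides with the $\ell^1$-norm on $C_3(M)$. Thus a relative fundamental cycle is literally a chain $z=\sum_{\sigma\in S_3(M)} a_\sigma\sigma\in C_3(M)$ with $\partial z\in C_2(\partial M)$ and $[z]=[M,\partial M]\in H_3(M,\partial M)$, and $\|M,\partial M\|=\|[M,\partial M]\|_1$ is by definition the infimum of $\|z\|_1$ over all such $z$. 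Since this infimum vanishes, for every $\delta>0$ there exists a relative fundamental cycle $z$ with $\|z\|_1\leq\delta$.

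Next I would use the elementary bound for the boundary operator: for any $z=\sum_\sigma a_\sigma\sigma\in C_3(M)$ one has $\partial z=\sum_\sigma a_\sigma\partial\sigma$, and since $\partial\sigma$ is an alternating sum of the four $2$-dimensional faces of $\sigma$, the triangle inequality gives $\|\partial z\|_1\leq 4\sum_\sigma|a_\sigma|=4\|z\|_1$. Therefore, given $\varepsilon>0$, it suffices to choose (as above, with $\delta=\varepsilon/4$) a relative fundamental cycle $z$ with $\|z\|_1\leq\varepsilon/4$; then automatically $\|z\|_1\leq\varepsilon$ and $\|\partial z\|_1\leq 4\|z\|_1\leq\varepsilon$, as required.

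There is no real obstacle here: the only points requiring care are the (trivial) verification that in top degree the relative $\ell^1$-seminorm is just the absolute one, so that a norm-minimizing sequence consists of genuine small-norm fundamental cycles, and the correct constant ($n+1=4$ in dimension $3$) in the Lipschitz estimate for $\partial$.
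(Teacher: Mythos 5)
Your argument is correct and is essentially identical to the paper's proof: take a relative fundamental cycle with $\|z\|_1\leq\varepsilon/4$ and use that the boundary operator is $4$-Lipschitz because a $3$-simplex has four facets. One small slip in your justification: $C_3(\partial M)$ is \emph{not} zero for singular chains (constant maps $\Delta_3\to\partial M$ are perfectly good singular $3$-simplices), so $C_3(M,\partial M)$ is a genuine quotient carrying the quotient norm rather than being identified with $C_3(M)$. This is harmless for the proof, since by definition of the quotient norm any relative cycle of small norm admits an honest representative in $C_3(M)$ of nearly the same $\ell^1$-norm, and the Lipschitz estimate for $\partial$ is then applied to that representative, whose boundary lies in $C_2(\partial M)$.
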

\begin{proof}
Let $\varepsilon>0$ be given. 
By definition, if $\|M,\partial M\|=0$, then  there exists a fundamental cycle $z\in C_3(M,\partial M)$ such that $\|z\|_1\leq \varepsilon/4\leq \varepsilon$. 
Since a $3$-simplex has $4$ facets, the boundary operator $\partial\colon C_{3}(M,\partial M)\to C_{2}(\partial M)$ has norm at most $4$, hence $\|\partial z\|_1\leq \varepsilon$.
\end{proof}

\begin{lemma}\label{general2:lemma}
There exists a constant $k>0$ such that the following holds. 
If $M$ is a compact $3$-manifold whose boundary is given by a finite union of spheres and/or tori, and $\|M,\partial M\|\neq 0$, then $\|M,\partial M\|\geq k$.
\end{lemma}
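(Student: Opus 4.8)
The plan is to reduce the statement, through the geometric decomposition of $M$, to the classical fact that the volumes of complete finite-volume hyperbolic $3$-manifolds admit a positive lower bound, and to read off the constant $k$ from it.

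First I would eliminate the spherical components of $\partial M$: capping off each $2$-sphere of $\partial M$ with a $3$-ball produces a compact orientable $3$-manifold $M'$ with (possibly empty) toral boundary, and $\|M',\partial M'\|=\|M,\partial M\|$, because gluing a $3$-ball along a spherical boundary component leaves the simplicial volume unchanged (here $\|B^3,S^2\|=0$ and $\pi_1(S^2)$ is trivial, hence amenable, so the equality follows from the well-known behaviour of the $\ell^1$-seminorm under gluings along boundary components with amenable fundamental group; see~\cite{Gromov,frigerio:book}). Thus from now on we may assume that $\partial M$ is a disjoint union of tori.

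Next I would invoke the computation of the simplicial volume of a compact orientable $3$-manifold with toral boundary in terms of its prime and JSJ decomposition. By a theorem of Soma, building on Thurston's hyperbolization, on Gromov's proportionality principle, on the additivity of $\|\cdot\|$ under connected sums and under gluings along incompressible tori, and (in full generality) on geometrization, one has
$$
\|M,\partial M\|\ =\ \frac{1}{v_3}\sum_{i}\vol(N_i)\ ,
$$
where $v_3$ is the volume of the regular ideal tetrahedron in $\matH^3$ and the $N_i$ are the finitely many hyperbolic pieces in the geometric decomposition of $M$, each carrying its complete finite-volume hyperbolic metric; in particular, all Seifert fibered pieces and, more generally, all graph pieces contribute $0$, as do the $S^3$- and the $S^2\times S^1$-summands of the prime decomposition (see~\cite{frigerio:book} and the references therein). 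Hence, if $\|M,\partial M\|\neq 0$, then the displayed sum is non-empty, so $M$ has at least one hyperbolic piece $N$, and
$$
\|M,\partial M\|\ \geq\ \frac{\vol(N)}{v_3}\ \geq\ \frac{v_{\min}}{v_3}\ =:\ k\ >\ 0\ ,
$$
where $v_{\min}>0$ denotes the infimum of the volumes of complete finite-volume hyperbolic $3$-manifolds: its positivity is classical (it follows, for instance, from the Margulis Lemma), and by~\cite{GMM} one may in fact take $v_{\min}=\vol(M_0)$, the volume of the Weeks manifold, so that $k=\vol(M_0)/v_3\approx 0.928\dots$ works, consistently with the gap of $SV(3)$ recalled in the Introduction.

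The soft ingredients here — the reduction to toral boundary, the vanishing of the simplicial volume of graph manifolds, and the positivity of $v_{\min}$ — are elementary given the tools already recalled. The only genuinely hard step is the displayed additivity formula, and more precisely its lower bound $\|M,\partial M\|\geq v_3^{-1}\sum_i\vol(N_i)$: this is the content of Soma's theorem, and it ultimately rests on Gromov's proportionality principle together with geometrization. (One could instead try to pass to the double $DM$ along $\partial M$, for which $\|DM\|\leq 2\|M,\partial M\|$ and $\|DM\|\in SV(3)$ has a gap at $0$, but bounding $\|DM\|$ from below by a multiple of $\|M,\partial M\|$ again requires exactly this kind of torus-additivity input, so nothing is gained.)
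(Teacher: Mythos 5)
Your proof is correct, but it takes a more hands-on route than the paper. The paper's actual argument is a two-line reduction: since $\partial M$ consists of spheres and tori, $\|M,\partial M\|=\|\inte(M)\|$ (this is the result of~\cite{KimKue} invoked later in Section~\ref{spectrum:sec}), hence $\|M,\partial M\|$ lies in $SV^{\text{lf}}_{\text{tame}}(3)=SV(3)\cup\{\infty\}$ by~\cite{HL2}, and one then quotes the known gap of $SV(3)$ at $0$ (\cite[Example 2.5]{HL}). You instead unwind what is behind that gap: cap off the spherical boundary components, apply the prime and JSJ decompositions together with additivity under connected sums and gluings along incompressible tori, and bound the resulting sum of hyperbolic volumes from below by $\vol(M_0)$, the volume of the Weeks manifold. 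The two arguments rest on exactly the same underlying inputs (geometrization, vanishing for graph pieces, the proportionality $\|N\|=\vol(N)/v_3$ for hyperbolic pieces, and the positive lower bound on hyperbolic volumes), and both yield the explicit constant $k=\vol(M_0)/v_3\approx 0.928$ recalled in the Introduction; yours is simply the self-contained version, the paper's the citation-based one. Two minor points worth making explicit in your write-up: the additivity formula as you state it requires first splitting off the $S^2\times S^1$-summands and the solid-torus pieces arising from compressible boundary tori (all of which contribute $0$), and the claim that $v_{\min}=\vol(M_0)$ over \emph{all} complete finite-volume hyperbolic $3$-manifolds uses, in addition to~\cite{GMM} for the closed case, the fact that cusped manifolds have volume at least $2v_3>\vol(M_0)$; neither affects the positivity of $k$, which is all the lemma needs.
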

\begin{proof}
 Since $M$ is bounded by spheres and tori, we have 
$\|M,\partial M\|=\|\inte(M)\|$, hence $\|M,\partial M\|\in SV^{\text{lf}}_{\text{tame}}(3)\subseteq SV(3)\cup \{\infty\}$. Now the conclusion
follows e.g.~from~\cite[Example 2.5]{HL}.

\end{proof}

Let now $M$ be an $n$-manifold, and let $N\subseteq M$ be a  compact connected codimension-0 submanifold. 
An easy application of the Excision Theorem for singular cohomology (together with the fact that $\partial N$ admits a collar in $N$, hence it is a strong neighbourhood deformation retract of $N$)
shows that, for every $k\in\mathbb{N}$, the map
$$
\psi_k\colon H_k (N,\partial N)\to H_k(M,M\setminus \inte(N))
$$
induced by the inclusion $(N,\partial N)\to (M,M\setminus \inte(N))$ is an isomorphism. In order to prove Theorems~\ref{main:thm} and~\ref{spectrum:thm},
we will need to show that, if $c$ is a fundamental cycle for an open manifold $M$, 
and $N$ is a suitably chosen codimension-0 compact submanifold of $M$,
then the finite chain $c|_N$ obtained by considering only the simplices of $c$ intersecting $N$
has an $\ell^1$-norm not smaller than $\|N,\partial N\|$. This is directly related to the fact that the isomorphism $\psi_k$ is isometric with respect to the $\ell^1$-norm. 
However, as shown in Remark~\ref{counter:ex} below,
the isomorphism $\psi_k$ is very far from being isometric in general. As is customary when working with the simplicial volume, when performing gluings along boundary components,
it is convenient (and somewhat necessary) to assume that such components are $\pi_1$-injective and have an amenable fundamental group (see e.g.~\cite{BBFIPP}):

\begin{prop}\label{amenable:prop}
Let $M$ be an $n$-manifold, and let $N\subseteq M$ be a  compact connected codimension-0 submanifold. 
Suppose that every component  of 
$\partial N$ is $\pi_1$-injective in $M$, and has an amenable fundamental group.
Then, for every $k\geq 2$ the map
$$
\psi_k\colon H_k (N,\partial N)\to H_k(M,M\setminus \inte(N))
$$
induced by the inclusion $(N,\partial N)\to (M,M\setminus \inte(N))$ is an isometric isomorphism.
\end{prop}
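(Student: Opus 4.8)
The strategy is to establish the two inequalities $\|\psi_k(\alpha)\|_1\le\|\alpha\|_1$ and $\|\psi_k(\alpha)\|_1\ge\|\alpha\|_1$ separately, for every $\alpha\in H_k(N,\partial N)$. The first is free: the chain-level inclusion $C_*(N,\partial N)\hookrightarrow C_*(M,M\setminus\inte(N))$ sends a relative cycle representing $\alpha$ to a relative cycle representing $\psi_k(\alpha)$ of the same $\ell^1$-norm, so passing to the infimum over representatives gives $\|\psi_k(\alpha)\|_1\le\|\alpha\|_1$. By Remark~\ref{counter:ex} this inequality is strict in general without the amenability hypothesis, so amenability must be used in an essential way for the converse.

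For the reverse inequality I would dualize. Write $W:=M\setminus\inte(N)$, so that $\partial W=\partial N$ and $M=N\cup_{\partial N}W$. Then $\psi_k$ is dual to the restriction map in relative bounded cohomology $\psi^k\colon H^k_b(M,W)\to H^k_b(N,\partial N)$, which is norm non-increasing. By the duality principle between relative $\ell^1$-homology and relative bounded cohomology one has $\|\alpha\|_1=\sup\{\langle\varphi,\alpha\rangle:\varphi\in H^k_b(N,\partial N),\ \|\varphi\|_\infty\le1\}$ and $\|\psi_k(\alpha)\|_1=\sup\{\langle\psi^k(\eta),\alpha\rangle:\eta\in H^k_b(M,W),\ \|\eta\|_\infty\le1\}$; hence it suffices to show that $\psi^k$ is an \emph{isometric} isomorphism for every $k\ge2$, for then it carries the closed unit ball of $H^k_b(M,W)$ onto that of $H^k_b(N,\partial N)$ and the two suprema coincide.

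To prove that $\psi^k$ is an isometric isomorphism I would exploit that each component of $\partial N$ has amenable fundamental group, so that $H^j_b(\partial N)=0$ for $j\ge1$, together with the hypothesis that $\partial N$ is $\pi_1$-injective in $M$ (whence also in $N$ and in $W$, with amenable image). By the relative versions of the Gromov--Ivanov vanishing and mapping theorems (i.e.\ the comparison results for pairs with amenable boundary), the natural maps $H^k_b(N,\partial N)\to H^k_b(N)$ and $H^k_b(W,\partial N)\to H^k_b(W)$ are isometric isomorphisms for $k\ge2$, and the relative Mayer--Vietoris sequence for $M=N\cup_{\partial N}W$, together with the vanishing of $H^k_b(\partial N,\partial N)$, yields an isometric splitting $H^k_b(M,\partial N)\cong H^k_b(N,\partial N)\oplus H^k_b(W,\partial N)$ for the $\max$-norm. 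A chase in the long exact sequence of the triple $(M,W,\partial N)$ — in which the restriction $H^k_b(M,\partial N)\to H^k_b(W,\partial N)$ is the projection onto the second summand — then identifies $H^k_b(M,W)$ isometrically with the first summand and $\psi^k$ with the resulting isometry onto $H^k_b(N,\partial N)$, which is what we need.

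I expect the main obstacle to be precisely the control of norms, i.e.\ obtaining the constant exactly $1$ rather than merely a bi-Lipschitz estimate. Since there is no continuous retraction $M\to N$, one cannot write down a norm-non-increasing inverse of $\psi_k$ by hand; the isometry statements above are exactly the content of the amenable-gluing technology, in which the amenability and $\pi_1$-injectivity of $\partial N$ are used to fill cycles supported near $\partial N$ by chains of arbitrarily small $\ell^1$-norm. Assembling these inputs carefully is the heart of the argument; the rest is formal.
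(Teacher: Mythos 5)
Your high-level strategy is the same as the paper's: the inequality $\|\psi_k(\alpha)\|_1\le\|\alpha\|_1$ is formal, and the reverse inequality is obtained by duality, reducing everything to the statement that every class in the unit ball of $H^k_b(N,\partial N)$ is the restriction of a class in the unit ball of $H^k_b(M,M\setminus \inte(N))$. The gap lies in how you propose to prove that statement. Bounded cohomology does not satisfy excision, and there is no Mayer--Vietoris sequence for it; in particular, the ``isometric splitting'' $H^k_b(M,\partial N)\cong H^k_b(N,\partial N)\oplus H^k_b(W,\partial N)$ that you want to extract from such a sequence is false in general, \emph{even when} the gluing locus is $\pi_1$-injective with amenable fundamental group. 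Already for an amalgamation over the trivial (hence amenable and injectively embedded) subgroup --- say $X=S^1\vee S^1$ cut into its two circles --- one has $H^2_b(X)=H^2_b(F_2)$ infinite-dimensional while both pieces have vanishing $H^2_b$; the same example shows that your map $\psi^k\colon H^k_b(M,W)\to H^k_b(N,\partial N)$ need not be injective, so ``isometric isomorphism'' is the wrong claim. (Fortunately injectivity is irrelevant to the duality argument: all you need is that $\psi^k$ maps the unit ball onto the unit ball.)

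What actually makes the extension work --- and what the paper does --- is Gromov's machinery of \emph{special} cochains: because the components of $\partial N$ are $\pi_1$-injective in $M$ and have amenable fundamental groups, the efficient relative cocycle $f\in C^k_b(N,\partial N)$ dual to $\alpha$ may be chosen special, and a special cocycle, together with the zero cocycles on the components of $M\setminus\inte(N)$, extends to a global relative cocycle $g\in C^k_b(M,M\setminus\inte(N))$ with $\|g\|_\infty=\|f\|_\infty$ (this is the extension result from \cite[Theorem 9.3]{frigerio:book} that the paper invokes). Pairing $g$ with an arbitrary representative of $\psi_k(\alpha)$ then yields $\|z'\|_1\ge\|\alpha\|_1/(1+\varepsilon)$ directly, with no need for any splitting of $H^k_b(M,\partial N)$. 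So you have correctly identified where amenability and $\pi_1$-injectivity must enter and that the whole difficulty is a cocycle-extension problem, but the Mayer--Vietoris/triple-sequence packaging you offer in its place relies on exactness and splitting properties that bounded cohomology does not have; as written, the key step of your argument does not go through.
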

\begin{proof}
As already mentioned, the fact that $\psi_k$ is an isomorphism is a consequence of the excision property for singular homology. Moreover, being induced
by a norm non-increasing map on singular chains, the map $\psi_k$ is norm non-increasing. In order to show that $\psi_k$ is also norm non-decreasing we need to exploit
the duality between the $\ell^1$-norm on singular cohomology and the $\ell^\infty$-norm on {bounded cohomology}, which we now briefly describe.

If $(X,Y)$ is any pair of spaces (i.e.~$X$ is a topological space and $Y\subseteq X$), we denote by $C^*(X,Y)$ the complex of singular relative cochains with real coefficients. 
For every $i\in\mathbb{N}$, we may
regard $S_i (X)$ as a subset of $C_i (X)$, so that
for any cochain $\varphi\in C^i (X,Y)$ it makes sense to set
$$
\|\varphi\|_\infty  = \sup \left\{|\varphi (s)|\ |\ s\in S_i (X)\right\}\in [0,\infty].
$$
We denote by $C^*_b (X,Y)$ the submodule of bounded cochains, i.e.~we set
$$C^*_b (X,Y)= \left\{\varphi\in C^* (X,Y)\ | \ \|\varphi\|<\infty\right\}\ .$$ Since
the differential takes bounded cochains to bounded cochains, $C^*_b (X,Y)$
is a subcomplex of $C^*(X,Y)$. The cohomology $H^*_b(X,Y)$ of the complex $C^*_b (X,Y)$ is the \emph{bounded cohomology} of the pair $(X,Y)$,
and is endowed with the 
$\ell^\infty$-seminorm induced by the $\ell^\infty$-norm on $C^*_b(X,Y)$.

The $\ell^\infty$-norm on singular cochains coincides with the dual norm of the $\ell^1$-norm on chains, and 
the duality pairing between $C^n_b(X,Y)$ and $C_n(X,Y)$ induces the \emph{Kronecker product}
$$
\langle \cdot,\cdot \rangle\colon H_b^n(X,Y)\times H_n(X,Y)\to \R\ .
$$
An easy argument based on Hahn-Banach Theorem  (see e.g.~\cite[Lemma 6.1]{frigerio:book}) implies 
that, for every 
 $n\in\mathbb{N}$ and  $\alpha\in H_n(X,Y)$, 
 \begin{equation}\label{duality:eq}
\|\alpha\|_1=\max \{ \langle\beta,\alpha\rangle\, |\,  \beta\in H_b^n(X,Y),\,  \|\beta\|_\infty \leq 1\}\ .
\end{equation}

Let us now take a class $\alpha\in H_k(N,\partial N)$, and let $z\in C_k(N)$ be a representative of $\alpha$
(hence $\partial z\in C_{k-1}(\partial N)$). 
By~\eqref{duality:eq},
there exists a bounded cohomology class $\varphi\in H^k_b(N,\partial N)$ such that $\|\varphi\|_\infty\leq 1$, and
$$
\langle \varphi,\alpha\rangle =\|\alpha\|_1\ .
$$
Let $\varepsilon>0$ be given, and
let $f\in C^k_b(N,\partial N)$ be a cocycle representing $\varphi$ and such that $\|f\|_\infty\leq\|\varphi\|_\infty+\varepsilon\leq 1+\varepsilon$.

Our hypotheses on the boundary components of $N$ allow us to 
assume that $f$ is \emph{special} (see~\cite[Definition 5.15 and Corollary 5.18]{frigerio:book}), and to 
extend $f$ to a relative cocycle $g\in C^k(M,M\setminus \inte{N})$ such that
$\|g\|_\infty=\|f\|_\infty$: indeed, if $C_1,\dots,C_h$ are the components of $M\setminus \inte{N}$,
then we
set $g_j\in C^k_b(C_j)$, $g_j=0$  for every $j=1,\dots,h$, and we then extend the collection of cocycles $\{f,g_1,\dots,g_h\}$ to a global
cocycle $g$ with $\|g\|_\infty=\|f\|_\infty$ as described in the proof of~\cite[Theorem 9.3]{frigerio:book}. By construction, $g$ vanishes on $M\setminus \inte{N}$, hence it is indeed 
an element of $C^k_b(M,M\setminus \inte{N})$.

Let now $z'\in C_k(M,M\setminus \inte(N))$ be any representative of $\psi_k(\alpha)$, so that $z'=z+\partial c+ d$, where $c\in C_{k+1}(M)$ and
$d\in C_{k}(M\setminus \inte(N))$ 
(here we are considering $C_k(N)$ as a subset of $C_k(M)$).
Since $g$ is a relative cocycle in $C^n_b(M,M\setminus \inte{N})$, we have 
$$\langle g, z'\rangle=\langle g,z\rangle=\langle f,z\rangle=\langle \varphi,\alpha\rangle =\|\alpha\|_1\ .$$ 
Moreover, 
$$
\langle g, z'\rangle\leq \|g\|_\infty \|z'\|_1=\|f\|_\infty\|z'\|_1 \leq \left( 1+\varepsilon\right) \|z'\|_1\ .
$$
We thus get $\|z'\|_1\geq \|\alpha\|_1/(1+\varepsilon)$, whence $\|z'\|_1\geq \|\alpha\|_1$ due to the arbitrariness of $\varepsilon$.
Since this inequality holds for every representative $z'$ of $\psi_k(\alpha)$, we may deduce that $\|\psi_k(\alpha)\|_1\geq \|\alpha\|_1$, and this concludes the proof of the proposition.
\end{proof}

\begin{rem}\label{counter:ex}
The following examples show that the assumption that every component of $\partial N$ is $\pi_1$-injective and has an amenable fundamental group
is indeed necessary for Proposition~\ref{amenable:prop} to hold.

(1): 
Let $M=S^3$, let $K\subseteq M$ be a hyperbolic knot, and let $N$ be the complement in $M$ of an open regular neighbourhood of $K$. 
Then $\partial N$ consists of a torus, hence $\pi_1(\partial N)$ is amenable (but $\partial N$ is not $\pi_1$-injective in $M$). 
Let $\pi\colon \colon H_3(M)\to H_3(M, M\setminus \inte(N))$  be the map induced
by the quotient map $C_3(M)\to C_3(M, M\setminus \inte(N))$. It is immediate to check that, if $\psi_3$ is as in Proposition~\ref{amenable:prop}, then
$\psi_3([N,\partial N])=\pi([M])$. However, $\|[N,\partial N]\|_1=\|N,\partial N\|_1>0$ since $K$ is hyperbolic, while
$\|\pi([M])\|_1\leq \|[M]\|_1=\|S^3\|=0$, since $\pi$ is norm non-increasing. This shows that $\psi_3$ is not an isometry.

(2):
Let $M$ be a closed orientable $3$-manifold containing a sequence of separating $\pi_1$-injective connected  surfaces $\{S_n\}_{n\in\mathbb{N}}$ of arbitrarily high genus
(such a manifold exists e.g.~by~\cite{Koba}). Let $N_n$ be (the closure of) one component of $M\setminus S_n$. By~\cite[Theorem 1]{BFP} we have
 $$\|N_n,\partial N_n\|\geq  (3/4)\|\partial N_n\|=(3/2)|\chi( S_n)|\ ,$$ hence $$\lim_{n\to +\infty} \|N_n,\partial N_n\|=+\infty\ .$$ 
Now, if $\pi_n\colon \colon H_3(M)\to H_3(M, M\setminus \inte(N_n))$  is  induced
by the quotient  $C_3(M)\to C_3(M, M\setminus \inte(N_n))$,
and $\psi_{n,3}\colon H_3(N_n,\partial N_n)\to H_3(M,M\setminus \inte{N_n})$ is as in Proposition~\ref{amenable:prop},
 then as above
$\psi_{n,3}([N_n,\partial N_n])=\pi_n([M])$. If $\psi_{n,3}$ were isometric for every $n\in\mathbb{N}$, then we would have
$$
\|N_n,\partial N_n\|=\|\psi_{n,3}([N_n,\partial N_n])\|_1=\|\pi_n([M])\|_1\leq \|[M]\|_1=\|M\|\ ,
$$
which would contradict the fact that $\lim_{n\to +\infty} \|N_n,\partial N_n\|=+\infty$.
\end{rem}

\section{Open irreducible $3$-manifolds}\label{contractible:sec}
Recall from the introduction that a $3$-manifold $M$ is irreducible if  every embedded $2$-sphere $S\subseteq M$ bounds a closed $3$-ball $B\subseteq M$.
Moreover, a surface $S\subseteq M$ is \emph{compressible} if there exists a simple closed loop $\gamma\subseteq S$ which does not bound any $2$-dimensional
disc on $S$, while it bounds a $2$-dimensional
disc $D\subseteq M$ such that $D\cap S=\gamma$. We will only consider compact connected orientable surfaces without boundary in orientable $3$-manifolds; in this context, 
the Dehn's Lemma (proved by Papakyriakopoulos~\cite{papa})
ensures that $S$ is incompressible if and only if it is $\pi_1$-injective, i.e.~if and only if the inclusion $S\hookrightarrow M$ induces
an injective map $\pi_1(S)\to \pi_1(M)$ on fundamental groups.

Let $M$ be a connected open $3$-manifold. We say that a subset of $M$ is \emph{bounded} if it is relatively compact, and \emph{unbounded} otherwise.
An \emph{exhaustion} of $M$ is  a sequence $\{M_n\}_{n\in\mathbb{N}}$ of connected  subsets of $M$ such that
$M=\bigcup_{n\in\mathbb{N}} M_n$ and,
for every $n\in\mathbb{N}$,  the set $M_n$ is a 
compact $3$-manifold with boundary,  
 every component of $M \setminus \inte{M_n}$ is unbounded, and
$M_n\subseteq \inte(M_{n+1})$.

\begin{lemma}\label{regular}
Every open $3$-manifold $M$ admits an exhaustion $\{M_n\}_{n\in\mathbb{N}}$  such that, for every $n\in\mathbb{N}$, every
component of $M\setminus \inte(M_n)$ has exactly one boundary component.
\end{lemma}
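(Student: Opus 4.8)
The plan is to start with an arbitrary exhaustion $\{N_n\}_{n\in\mathbb{N}}$ of $M$ by compact codimension-$0$ submanifolds with the property that every component of $M\setminus\inte(N_n)$ is unbounded — such an exhaustion exists by a standard argument (take any exhaustion by compact sets, thicken to a compact submanifold, and then \emph{absorb} any bounded complementary components into $N_n$, since there are only finitely many boundary components and hence finitely many complementary components to check). The goal is then to modify each $N_n$ so that each complementary component has a \emph{connected} boundary, while preserving the exhaustion properties.

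First I would analyze a single complementary component $C$ of $M\setminus\inte(N_n)$. Its boundary $\partial C$ is a finite disjoint union of closed orientable surfaces $\Sigma_1,\dots,\Sigma_k$, each of which is also a boundary component of $N_n$. The idea is to \emph{tube together} the boundary components of $C$: since $C$ is connected, for any two of its boundary components $\Sigma_i,\Sigma_j$ there is an embedded arc in $C$ joining them, and thickening such an arc gives a $1$-handle whose interior we remove from $C$ (equivalently, attach to $N_n$). Adding this $1$-handle to $N_n$ replaces $\Sigma_i\sqcup\Sigma_j$ by a single connected surface (their connected sum, formed along the tube), reducing the number of boundary components of $C$ by one; iterating, after $k-1$ steps the component $C$ has connected boundary. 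Doing this for every complementary component of $N_n$ simultaneously (the components are finite in number, and the handles can be chosen disjoint and disjoint from each other's components) produces a new compact submanifold $M_n'\supseteq N_n$ all of whose complementary components have connected boundary; moreover every such complementary component is still unbounded, because we only \emph{removed} compact handles from the (unbounded) components $C$, and removing a compact piece from an unbounded connected manifold-with-boundary leaves it unbounded and connected (one must check connectedness is preserved — attaching a $1$-handle along an arc in the connected set $C$ does not disconnect its complement in $C$, as the cocore disc of the handle can be pushed off).

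Finally I would arrange the nesting condition $M_n\subseteq\inte(M_{n+1})$ and the exhaustion condition $\bigcup_n M_n=M$. The subtlety is that performing the tubing construction on $N_{n+1}$ independently of $N_n$ need not produce nested submanifolds. To fix this, I would proceed inductively: having constructed $M_n$ (with connected complementary boundaries), choose $N_{n+1}$ from the original exhaustion with $M_n\subseteq\inte(N_{n+1})$, and then perform the tubing on $N_{n+1}$ \emph{using arcs contained in $N_{n+1}\setminus\inte(M_n)$} — this is possible because each complementary component of $N_{n+1}$ meets $M\setminus\inte(M_n)$, and connectivity of the relevant pieces lets us route the arcs away from $\inte(M_n)$; the resulting handles are disjoint from $M_n$, so $M_n\subseteq\inte(M_{n+1})$ still holds. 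Since $M_n\supseteq N_n$ (or at least $M_n$ contains $N_{k}$ for $k$ large enough in the interleaved induction), we get $\bigcup_n M_n=M$. The main obstacle is precisely this bookkeeping: ensuring that the tubing arcs at stage $n+1$ can be taken in the complement of $\inte(M_n)$ while still connecting up the right boundary components, and that all modifications keep every complementary component connected \emph{and} unbounded — these are the points where one must argue carefully with the connectedness of complementary components rather than appealing to general nonsense.
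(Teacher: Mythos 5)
Your proposal is correct and follows essentially the same route as the paper: both arguments tube together the boundary components of each unbounded complementary component along disjoint arcs, thickening them into $1$-handles that are absorbed into $M_n$. The only difference is the nesting step, where the paper simply passes to a subsequence (each modified $M_n$ is compact, hence contained in the interior of some later term of the sequence), which avoids the arc-routing bookkeeping you describe.
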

\begin{proof}
It is well-known that
there exists a proper smooth function $f\colon M\to \mathbb{R}$. By Morse-Sard Theorem, we may choose a sequence $\{R_n\}_{n\in\mathbb{N}}\subseteq \mathbb{R}$ 
of regular values for $f$ such that
$\lim_{n\to \infty} R_n=+\infty$.
Then, for every $n\in\mathbb{N}$ the set $M_n=f^{-1}([-R,R])$ is  a compact smooth manifold with boundary. 
Observe that, being compact, the manifold $M_n$ has a finite number of connected components.
Up to 
adding to $M_n$ the disjoint regular neighbourhoods of a finite number of disjoint arcs (transverse to $\partial M_n$) connecting the components of $M_n$, we may suppose that $M_n$ is connected,
and up to
replacing $M_n$ with the union of $M_n$ with all the bounded components
of $M\setminus M_n$, we may also assume that $M\setminus M_n$ has no bounded components.

Let now $V$ be a component of $M_n\setminus \inte (M_n)$. Suppose that
$\partial V$ is disconnected, and denote by $B_1,\dots,B_k$ the connected components of $\partial V$ (the components of $\partial V$ are in finite
number because $\partial V\subseteq \partial M_n$ and $M_n$ is compact). For every $i=1,\dots,k-1$, choose an arc $\gamma_i\subseteq V$
joining a point in $B_i$ with a point in $B_{i+1}$, in such a way that the $\gamma_i$ are pairwise disjoint. Finally, take small disjoint regular neighbourhoods $N_1,\dots,N_{k-1}$
of $\gamma_1,\dots,\gamma_{k-1}$, and replace  $M_n$ with $M_n\cup N_1\cup\dots\cup N_{k-1}$. After repeating this procedure for every component $M_n\setminus \inte (M_n)$
with disconnected boundary, we end up with a compact connected submanifold with boundary (which we still denote by $M_n$) with the following properties: 
$f^{-1}([-R_n,R_n])\subseteq M_n$, and every component of $M\setminus \inte(M_n)$ is unbounded and has exactly one boundary component. Up to passing to a subsequence 
$\{M_{n_i}\}_{i\in\mathbb{N}}$, we may also obtain $M_{n_i}\subseteq \inte (M_{n_{i+1}})$ for every $i\in\mathbb{N}$, and this concludes the proof.
\end{proof}

If $\sigma\colon \Delta_n\to M$ be a singular simplex,  we denote by $\supp(\sigma)\subseteq M$ the image of $\sigma$.
If $c=\sum_{i\in I} a_i\sigma_i$ is a locally finite singular chain in $M$ and $X$ is a subset of $M$, then we set
$\supp(c)=\bigcup_{i\in I} \supp(\sigma_i)$, and
 $c|_X=\sum_{i\in I_X} a_i\sigma_i$, where $I_X=\{i\in I\, |\, \supp(\sigma_i)\cap X\neq \emptyset\}$. 
It follows from the very definitions that, if $c$ is locally finite and $X$ is compact, then $c|_X$ is a finite chain, hence $\supp(c|_X)$ is also compact
(but observe that $\supp(c|_X)\nsubseteq X$ in general).

\begin{prop}\label{toric}
Let $M$ be an open $3$-manifold with $\|M\|<+\infty$. Then there exists an exhaustion  $\{M_n\}_{n\in\mathbb{N}}$ of $M$ such that, for every $n\in\mathbb{N}$,
the boundary
$\partial M_n$ is a finite union of spheres and/or tori.
\end{prop}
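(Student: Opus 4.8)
The plan is to manufacture the exhaustion by repeatedly \emph{compressing} the boundary of an arbitrary exhaustion, and to bring in the hypothesis $\|M\|<+\infty$ only in order to rule out the single obstruction to ending up with spheres and tori, namely incompressible boundary components of genus $\geq 2$. The main point — and the only place where finiteness of the simplicial volume really enters — will be a duality argument in which infinitely many relative bounded cocycles, one on each of a sequence of compact pieces going out an end, are assembled into a single globally bounded cochain on $M$.

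First I would fix any exhaustion $\{M_n\}$ of $M$. Whenever a boundary component of some $M_n$ is compressible, I attach to (resp.\ delete from) $M_n$ a regular neighbourhood of a compressing disc and then restore the defining properties of an exhaustion by the constructions in the proof of Lemma~\ref{regular}, passing to a subsequence of exhaustion levels whenever room is needed (for the compressions on the \emph{outer} side of a component this is automatic; the components on the inner side relevant below can be handled with some care). Each compression strictly decreases the complexity $\sum_S\mathrm{genus}(S)^2$ of $\partial M_n$, and by Kneser--Haken finiteness only finitely many are possible on a fixed $M_n$; a diagonal argument then yields an exhaustion, still called $\{M_n\}$, all of whose boundary components are incompressible in $M$, hence $\pi_1$-injective in $M$ and in every compact submanifold of $M$ containing them. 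It remains to prove that, when $\|M\|<+\infty$, no such component has genus $\geq 2$; I assume the contrary and aim for $\|M\|=+\infty$.

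Let $S^{(0)}=\partial V^{(0)}$ be a genus-$\geq 2$ component of some $\partial M_{n_0}$, with $V^{(0)}$ the complementary region it bounds, and consider the finitely-branching tree whose level-$k$ nodes are the genus-$\geq 2$ boundary components of $M_{n_0+k}$ lying inside $V^{(0)}$ below previously chosen ones. If this tree is finite, then inside $V^{(0)}$ there is a compact submanifold cutting off a neighbourhood of the relevant end whose inner boundary consists only of spheres and tori; adjoining it to $M_{n_0}$ replaces $S^{(0)}$ by genus-$\leq 1$ surfaces, and doing this for every genus-$\geq 2$ boundary component of every $M_n$ — then passing once more to a subsequence — produces the desired exhaustion. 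By K\"onig's lemma the only remaining case is that the tree has an infinite branch: there are $n_0<n_1<\cdots$ and nested complementary regions $V^{(0)}\supsetneq V^{(1)}\supsetneq\cdots$, with $V^{(k)}$ a component of $M\setminus\inte(M_{n_k})$ and $\partial V^{(k)}$ of genus $\geq 2$; since $\bigcup_kM_{n_k}=M$ one has $\bigcap_kV^{(k)}=\emptyset$. Now fix a locally finite fundamental cycle $c$ of $M$ with $\|c\|_1<+\infty$. Choosing the indices $n_k$ sparse enough that the part of $c$ lying beyond $M_{n_k}$ has $\ell^1$-norm $\leq 2^{-k}\|c\|_1$, putting the surfaces $\partial M_{n_k}$ in general position with respect to $c$, and subdividing $c$ along them (the total cost being dominated by $\sum_k 2^{-k}\|c\|_1<+\infty$), I may replace $c$ by a locally finite fundamental cycle $c'$, still of finite $\ell^1$-norm, no simplex of which straddles any component of any $\partial M_{n_k}$.

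Put $A_k=\overline{M_{n_k}\cap V^{(k-1)}}$: a compact $3$-manifold with (after the compressions above) incompressible boundary containing the genus-$\geq 2$ surface $\partial V^{(k-1)}$, whence by~\cite[Theorem 1]{BFP} and $\|\Sigma_g\|=4g-4$ we get $\|A_k,\partial A_k\|\geq\tfrac34\|\partial A_k\|\geq 3$. Since no simplex of $c'$ straddles $\partial A_k$, the finite subchain $z_k$ of $c'$ formed by the simplices supported in $A_k$ is a relative cycle representing $\pm[A_k,\partial A_k]$ in $H_3(A_k,\partial A_k)$: indeed $c'-z_k$ lies in $C_3(M\setminus\inte A_k)$, so under the excision isomorphism $H_3(A_k,\partial A_k)\cong H_3(M,M\setminus\inte A_k)$ the class $[z_k]$ is the image of $[M]$, which is $\pm[A_k,\partial A_k]$. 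By the duality~\eqref{duality:eq} for $(A_k,\partial A_k)$ choose a bounded relative cocycle $\omega_k\in C^3_b(A_k,\partial A_k)$ with $\|\omega_k\|_\infty\leq 2$ and, after fixing its sign, $\langle\omega_k,z_k\rangle=\|A_k,\partial A_k\|\geq 3$. Extend each $\omega_k$ by zero to all singular $3$-simplices of $M$, getting $g_k\in C^3_b(M)$ with $\|g_k\|_\infty\leq 2$; as distinct $A_k$ have disjoint interiors, a non-degenerate $3$-simplex lies in at most one of them, so $G:=\sum_k g_k$ is a well-defined element of $C^3_b(M)$ with $\|G\|_\infty\leq 2$ (it need not be a cocycle, but this is immaterial since $c'$ is a cycle). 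Then $\langle G,c'\rangle=\sum_k\langle\omega_k,z_k\rangle\geq\sum_k 3=+\infty$, while $|\langle G,c'\rangle|\leq\|G\|_\infty\,\|c'\|_1\leq 2\|c'\|_1<+\infty$, a contradiction; so every boundary component has genus $\leq 1$. The step I expect to be the real obstacle is exactly this last one: one cannot extend a single $\omega_k$ to a bounded cocycle on $M$ — the excision maps $H_3(A_k,\partial A_k)\to H_3(M,M\setminus\inte A_k)$ are far from isometric because $\partial V^{(k-1)}$ is not amenable, precisely as in Remark~\ref{counter:ex} — and the trick is to keep the $\omega_k$ as (non-cocycle) cochains, use that pairing against a cycle kills coboundaries, and let the accumulation of infinitely many pieces going out an end defeat finiteness of $\|M\|$; the accompanying $3$-manifold topology (compressions compatible with exhaustions, the general-position and subdivision estimates, and the behaviour of fundamental classes under restriction) is routine but must be set up with care.
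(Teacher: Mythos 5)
Your proposal has two genuine gaps, and the second one is fatal. First, the reduction to incompressible boundary components cannot be carried out in general: many open $3$-manifolds (any simply connected one, or the interior of a solid torus or handlebody) contain no closed incompressible surface whatsoever, and when the only compressing discs for a component of $\partial M_n$ lie on the inner side, deleting a neighbourhood of such a disc is typically incompatible with retaining an exhaustion --- compressing the boundary of each solid torus in the standard exhaustion of the open solid torus produces balls, which cannot exhaust a manifold with nontrivial $\pi_1$, and the same happens for genus-$2$ handlebody exhaustions. So after your first step you may be left with \emph{compressible} genus $\geq 2$ boundary components that neither branch of your dichotomy can treat; the phrase ``handled with some care'' is where the difficulty is hiding. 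Second, and decisively, the general-position-and-subdivision step --- replacing $c$ by a locally finite fundamental cycle $c'$ no simplex of which straddles the surfaces $\partial M_{n_k}$, at total cost controlled by $\sum_k 2^{-k}\|c\|_1$ --- is not merely unjustified but impossible with any uniform cost bound. If every fundamental cycle $c$ could be cut along an incompressible surface at cost at most $K\|c\|_1$, then in the situation of Remark~\ref{counter:ex}(2) the part of the cut cycle supported in $N_n$ would be an honest relative fundamental cycle of $(N_n,\partial N_n)$ of norm at most $(1+K)(\|M\|+\varepsilon)$, contradicting $\|N_n,\partial N_n\|\to+\infty$. You correctly observe that one cannot extend $\omega_k$ to a bounded cocycle on $M$ because $\partial V^{(k-1)}$ is non-amenable, but the dual obstruction --- that one cannot cut a cycle along a non-amenable surface with controlled norm --- is exactly as severe, and pairing the non-cocycle cochain $G$ against $c'$ does not evade it, because the computation $\langle\omega_k,z_k\rangle=\|A_k,\partial A_k\|$ needs $z_k$ to be a genuine relative cycle of $(A_k,\partial A_k)$, which is precisely what the (impossible) subdivision was supposed to provide.

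For comparison, the paper's argument never cuts the cycle along surfaces and never needs incompressibility. Starting from the exhaustion of Lemma~\ref{regular}, it picks $m$ so large that $\|c-c|_{Z_m}\|_1<1/2$, whence $\|\partial(c|_{Z_m})\|_1<2$; the class this boundary represents in the second homology of the complementary region equals the class of the corresponding component of $\partial Z_m$, and Gabai's inequality (the $\ell^1$-norm of a $2$-dimensional class dominates its Thurston norm up to a factor of $2$), combined with the integrality of the Thurston norm, forces this integral class to have Thurston norm zero, i.e.\ to be represented by disjoint spheres and tori; these surfaces are then used to cut off the new compact piece of the exhaustion. The hypothesis $\|M\|<+\infty$ enters only through this elementary norm estimate on $\partial(c|_{Z_m})$, with no bounded-cohomology pairing and no manipulation of individual simplices.
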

\begin{proof}
By Lemma~\ref{regular}, we may take an exhaustion
 $\{Z_n\}_{n\in\mathbb{N}}$ of $M$
 such that, for every $n\in\mathbb{N}$, every
component of $M\setminus \inte{Z_n}$ has exactly one boundary component.
 

Let us fix $n\in\mathbb{N}$. We first construct a compact submanifold $M'_n\subseteq M$ such
that $Z_n\subseteq M'_n$ and $M'_n$ is bounded by spheres and/or tori.

Let $c\in C_3(M)$ be a locally finite fundamental cycle for $M$ with finite $\ell^1$-norm. For every $i\in\mathbb{N}$, also set $c_i=c|_{Z_i}$.  
We have $\lim_{m\to +\infty} \|c-c_m\|=0$, hence there exists $m>n$ such that $\|c-c_{m}\|<1/2$. Recall now that the boundary map
$\partial \colon C_3(M)\to C_2(M)$ has operator norm equal to $4$, and that $\partial c_{m}=\partial (c-c_{m})$ since $c$ is a cycle. We thus
have
$\|\partial c_m\|_1=\|\partial (c-c_m)\|_1\leq 4\|c-c_m\|_1<2$. Also observe that, since $\supp(c_m)$ is compact, there exists $m'>m$ such that
$\supp(c_m)\subseteq Z_{m'}$.

By construction, $\supp(\partial c_m)=\supp (\partial (c-c_m))\subseteq Z_{m'}\setminus Z_m$, hence $c_m$ defines a class in the relative homology module
$H_3(Z_{m'},Z_{m'}\setminus Z_m)$, while $\partial c_m$ defines a class in $H_2(Z_{m'}\setminus \inte (Z_{m}))$.   
We first decompose $\partial c_m$ into the contributions corresponding to the connected components of $Z_{m'}\setminus Z_m$.  
Let 
$B_1,\dots,B_k$ be the boundary components of $Z_m$, and denote
by $W_i$ the component of $Z_{m'}\setminus Z_m$ whose closure contains $B_i$. Since each component of  $M\setminus Z_m$ 
has exactly one boundary component,
we have $W_i\neq W_j$ for $i\neq j$.
We set $w_i=(\partial c_m)|_{W_i}$, so that
$\partial c_m=w_1+\dots +w_k$. Since the $W_i$ are pairwise disjoint, each $w_i$ is a cycle in $C_2(W_i)$. 


For every point $p\in \inte(Z_m)$, the chain $c_m$ defines the standard generator  $1\in H_3(Z_m,Z_m\setminus \{p\})\cong \mathbb{R}$. As a consequence,
under the excision isomorphism $H_3(Z_{m'},Z_{m'}\setminus Z_m)\cong H_3(Z_m,\partial Z_m)$, the class
$[c_m]$ corresponds to the relative fundamental class of $Z_m$. 
Since the connecting homomorphism $H_3(Z_m)\to H_2(\partial Z_m)$ takes $[Z_m]$ to the sum of the fundamental classes
of the components of $\partial Z_m$, it follows that $[w_i]=[B_i]$ in $H_2(W_i)$, where $[B_i]$ is the image of the fundamental
class of $B_i$ under the map $H_2(B_i)\to H_2(W_i)$ induced by the inclusion $B_i\hookrightarrow W_i$. 

Observe now that $\|w_i\|_1\leq \|\partial c_m\|_1<2$. By~\cite[Corollary 6.18]{Gabai}, this implies that the Thurston norm of $[B_i]=[w_i]$ is strictly smaller than $1$.
But $[B_i]$ is (the image in a real homology module of) an integral class, hence the Thurston norm of $[B_i]$ is in fact zero~\cite{Thurston:norm}, i.e.~$[B_i]$ may be represented by the union of a finite family
of spheres and tori. 
That is, there exists a finite family $\{S_1,\dots,S_h\}$ of disjoint spheres and tori in $W_i$ such that, if $[S_i]$ denotes the image of the fundamental class of $S_i$
in $H_2(W_i)$, then $[B_i]=[S_1]+\dots +[S_h]$. Let $W_i'$ be the  connected component of $W_i\setminus (S_1\cup\dots\cup S_h)$ containing
$B_i$; we claim that $\overline{W'_i}$ is  a compact manifold with boundary such that $B_i\subseteq \partial \overline{W'_i}\subseteq B_i\cup (S_1\cup\dots\cup S_h)$.

The fact that $\overline{W'_i}$ is  a compact manifold with boundary such that $B_i\subseteq \partial \overline{W'_i}$ is obvious, and
of course $ \partial \overline{W'_i}$ is contained in $\partial W_i\cup  (S_1\cup\dots\cup S_h)$, hence we only need to prove that the only component
of $\partial W_i$ contained in $\partial \overline{W'_i}$ is $B_i$. 
Let us denote by $i\colon H_2(W_i)\times H_1(W_i,\partial W_i)\to\mathbb{R}$ the usual intersection form. 
If $\gamma$ is a continuous path in $W_i$ joining $B_i$ with a component $C\neq B_i$
of $\partial W_i$, then it is easy to see that $I([B_i],[\gamma])=1$ (up to a slight perturbation, we may let $\gamma$ intersect $B_i$ transversely
only at its starting point), hence $I([S_1]+\dots+[S_h],[\gamma])=1$, which implies that $\gamma$ cannot be disjoint from $S_1\cup\dots\cup S_h$.
This shows that $\overline{W'_i}$ does not contain any component of
$\partial W_i\setminus B_i$, hence it is 
a compact manifold with boundary such that $B_i\subseteq \partial \overline{W'_i}\subseteq B_i\cup (S_1\cup\dots\cup S_h)$.

If we  set
$$
M'_n=Z_m\cup \left(\bigcup_{i=1}^k \overline{W'_i}\right)\ ,
$$
we then obtain the desired $3$-manifold bounded by spheres and tori, and we finally 
define $M_n$ as the union of $M'_n$ with the bounded components
of $M\setminus M_n'$. Up to choosing a subsequence of $\{M_n\}_{n\in\mathbb{N}}$, we may  also suppose that
$M_n\subseteq \inte(M_{n+1})$ for every $n\in\mathbb{N}$, thus obtaining the desired exhaustion of $M$.

\end{proof}

\begin{rem}
In any topological space,
integral homology classes of dimension $2$ may be represented by surfaces. 
In the proof of Proposition~\ref{toric} we used the fact that, in the context of $3$-manifolds,   a $2$-dimensional class with vanishing $\ell^1$-seminorm may be represented by a union of spheres and tori. However, this fact is very far from being true in general:
for example, it is shown in~\cite{BarGhys} that, for every $g\in\mathbb{N}$, there exist a space $X_g$ with with a nilpotent fundamental group and a class
$\alpha_g\in H_2(X_g,\mathbb{R})$  which does not lie in the subspace generated by classes of
$H_2(X_g,\mathbb{R})$ represented by surfaces of 
 genus smaller than $g$. However, since $\pi_1(X_g)$ is amenable, the $\ell^1$-norm of any class in $H_2(X_g,\mathbb{R})$ vanishes. 
 
 In general, characterizing $2$-dimensional
 classes with vanishing seminorm seems to be a challenging task, we refer the reader to~\cite{FS2020} for a discussion of this topic.
\end{rem}

\begin{lemma}\label{solidtori}
Let $M$ be an open irreducible $3$-manifold, and let $N\subseteq M$ be a compact connected codimension-0 submanifold with boundary. Also suppose that
every component of $M\setminus N$ is unbounded. Then:
\begin{enumerate}
\item If one component of $\partial N$ is a sphere, then $N$ is homeomorphic to a closed $3$-ball.
\item If one component of $\partial N$ is a compressible torus, then $N$ is homeomorphic to a solid torus.
\end{enumerate}
\end{lemma}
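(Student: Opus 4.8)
The plan is to exploit irreducibility together with the hypothesis that every component of $M\setminus N$ is unbounded, which is exactly what forbids the "wrong" side of a sphere or compressing disc from being swallowed by the non-compact end.

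\textit{Part (1).} Let $S\subseteq \partial N$ be a sphere component. Since $M$ is irreducible, $S$ bounds a closed $3$-ball $B\subseteq M$. The sphere $S$ separates $M$ locally into a side contained in $N$ and a side contained in $\overline{M\setminus N}$; since $B$ is one of the two "sides" of $S$ near $S$, either $B\subseteq N$ or $B\subseteq \overline{M\setminus N}$. If $B$ were contained in $\overline{M\setminus N}$, then the component of $M\setminus N$ adjacent to $S$ would be contained in the compact set $B$, contradicting the hypothesis that every component of $M\setminus N$ is unbounded. Hence $B\subseteq N$. But $B$ is a compact connected codimension-$0$ submanifold with $\partial B=S$, so $B$ is a union of connected components of $N$ together possibly with more of $N$ on the $N$-side of $S$; since $N$ is connected and $S\subseteq\partial N$, in fact $N=B$, i.e.\ $N$ is a closed $3$-ball. (One should note here that $\partial N$ then has no other components, since a $3$-ball has connected boundary; this is consistent because the sphere $S$ already is all of $\partial B=\partial N$.)

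\textit{Part (2).} Let $T\subseteq\partial N$ be a compressible torus component, and let $\gamma\subseteq T$ be an essential simple closed curve bounding an embedded disc $D\subseteq M$ with $D\cap T=\gamma$. As in Part (1), $D$ lies on one side of $T$: either $D\subseteq N$ or $D\subseteq\overline{M\setminus N}$. Compress $N$ along $D$ (if $D\subseteq N$) or compress the outside component along $D$ (if $D$ lies outside). In the first case, surgering $N$ along $D$ produces a compact manifold $N'$ with a sphere boundary component $S'$ replacing $T$; by irreducibility $S'$ bounds a ball, and an argument as in Part~(1) — using that the relevant component of the complement is unbounded, hence cannot be the ball capping $S'$ on the outside — shows that $N$ is obtained from a $3$-ball by attaching a $1$-handle along the two discs, i.e.\ $N$ is a solid torus. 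In the second case, the compression happens on the outer side; surgering the outer component $W$ (the one adjacent to $T$) along $D$ would, after capping the resulting sphere with a ball from irreducibility, exhibit $W$ as a solid torus or as a ball-with-handle sitting on the unbounded side — one must rule this out, or rather observe that if the compressing disc is on the outside then filling $T$ in from the outside turns that end into something compact, again contradicting unboundedness, unless the outside piece is itself a solid torus glued back; careful bookkeeping with the two cases (disc inside vs.\ outside) combined with the unboundedness hypothesis pins down $N$ as a solid torus.

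\textit{Main obstacle.} The delicate point is the case analysis for the side on which the compressing disc $D$ lies in Part~(2), together with making rigorous the step "surger, cap off the resulting $2$-sphere with a ball, conclude the original piece was a ball or a $1$-handlebody." Concretely: after compressing $N$ along an inside disc $D$ one gets $N'=N\setminus(\text{nbhd of }D)$ with a new sphere boundary $S'$; irreducibility gives a ball $B'$ with $\partial B'=S'$, and one must argue $B'$ is on the appropriate side (again via unboundedness of the complement, since the other side of $S'$ meets $\overline{M\setminus N}$) so that $N'\cup B'$ is a ball and hence $N$, recovered by removing $B'$ and attaching a $1$-handle, is a solid torus. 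The "disc on the outside" subcase is the one to watch: there the hypothesis that every component of $M\setminus N$ is unbounded is again precisely the tool that forces the capping ball to lie inside $N$ and yields the same conclusion. Throughout, irreducibility and the unboundedness of the ends are used in tandem, and getting the "which side" arguments airtight is the crux.
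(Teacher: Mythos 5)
Your Part (1) is essentially the paper's argument: irreducibility gives a ball $B$ with $\partial B=S$, and the hypothesis that every component of $M\setminus N$ is unbounded forces $B\subseteq N$. The paper makes this precise by observing that $B\cap(M\setminus N)=\inte(B)\cap(M\setminus N)$ is open in $M\setminus N$ by domain invariance and closed by compactness of $B$, hence a union of components of $M\setminus N$, all of which are unbounded; since $B$ is compact this union is empty. Your final step ``$B$ is a union of connected components of $N$ together possibly with more of $N$ \dots\ hence $N=B$'' is not an argument as written; the paper closes this cleanly by passing to the doubles $DB\hookrightarrow DN$ and running the same open--closed reasoning ($DB$ is open and closed in the connected $DN$, so $DB=DN$ and $B=N$). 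This is a fillable gap, and the overall route for (1) is the same as the paper's.

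Part (2) is where the proposal genuinely breaks down. You attempt to re-derive by hand the fact that a compressible torus in an irreducible $3$-manifold bounds a solid torus, via compressing discs, surgery and capping, and you explicitly leave the decisive case analysis open (``one must rule this out, or rather observe\dots'', ``careful bookkeeping\dots\ pins down $N$ as a solid torus''). Two concrete problems: (i) when the compressing disc lies outside $N$, your sketch never actually produces a solid torus contained in $N$, so the argument does not close; (ii) even when $D\subseteq N$, after surgering you obtain a sphere $S'$ bounding a ball $B'$, and you do not address the nested case in which $B'$ contains the surgery tube (so that $T\subseteq B'$) --- exactly the case where the naive ``ball plus $1$-handle'' picture fails and one needs Alexander's solid torus theorem or an equivalent input. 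The paper sidesteps all of this by quoting the well-known fact that a compressible torus in an irreducible $3$-manifold bounds a solid torus $W$ somewhere in $M$, and then running the identical containment argument as in Part (1) on $W$: since $\partial W\subseteq N$, the set $W\cap(M\setminus N)$ is open and closed in $M\setminus N$, hence empty by unboundedness, so $W\subseteq N$, and the doubling trick gives $W=N$. You should either cite that fact or prove it separately; the surgery sketch as written does not substitute for it.
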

\begin{proof}
Let $S\subseteq \partial N$ be a component of $\partial N$. By definition of irreducible manifold, if $S\cong S^2$, then $S$ bounds a closed $3$-ball $B\subseteq M$. Moreover,
it is well-known that every compressible torus in an irreducible $3$-manifold bounds a solid torus, hence 
if $S\cong S^1\times S^1$ and $S$ is compressible, then $S=\partial T$, where $T\subseteq M$ is a solid torus. Let us set $W=B$ in the former case,
and $W=T$ in the latter one. 

We first show that $W\subseteq N$. Since $W$ is compact,
  $W\cap (M\setminus N)$ is closed in $M\setminus N$. Moreover, since $\partial W\subseteq N$, we have
  $W\cap (M\setminus N)=\inte(W)\cap (M\setminus N)$. Since $\inte(W)$ is a $3$-manifold without boundary, by the Domain Invariance Theorem we have that
   $W\cap (M\setminus N)$ is also open in $M\setminus N$. Therefore, $W\cap (M\setminus N)$ is a union of connected components of $M\setminus N$. However, since 
   no component of $M\setminus N$ is bounded, while $W$ is compact, this implies that $W\cap (M\setminus N)=\emptyset$, i.e.~that $W\subseteq N$.
   
 Now the inclusion $W\hookrightarrow N$ extends to an inclusion between the doubles $DW\hookrightarrow DN$ of the manifolds $W,N$.
 Just as above, $DW$ is open (by the Domain Invariance Theorem) and closed (since it is compact) in $DN$. Since $N$ is connected,
 this implies $DW=DN$, whence $W=N$, as desired. 
   \end{proof}


\begin{prop}\label{irreducible:prop}
Let $M$ be an irreducible $3$-manifold not homeomorphic to $\mathbb{R}^3$ and such that $\|M\|<+\infty$. Then $M$ admits an exhaustion $\{M_n\}_{n\in\mathbb{N}}$ such that
$M_0$ is not contained in any $3$-ball in $M$, and at least one of the following conditions holds:
\begin{enumerate}
\item Each $M_n$, $n\in\mathbb{N}$, is homeomorphic to a solid torus and, for every $n\in\mathbb{N}$, the torus $\partial M_n$ is incompressible in $M\setminus \inte (M_0)$, or
\item For every $n\in\mathbb{N}$, the boundary $\partial M_n$ consists of a finite union of incompressible tori.
\end{enumerate} 
\end{prop}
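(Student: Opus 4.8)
The plan is to start from the exhaustion produced by Proposition~\ref{toric}, arrange that $M_0$ is not contained in any $3$-ball, remove the spherical boundary components using irreducibility, and finally split into the two asserted alternatives according to whether some boundary torus ever compresses in $M$. First I would note that $M$ must contain a compact subset lying in no $3$-ball: otherwise $M$ would be a monotone union of closed $3$-balls $B_0\subseteq\inte B_1\subseteq\cdots$, each $\overline{B_{n+1}\setminus B_n}$ would be a copy of $S^2\times[0,1]$ by Alexander's theorem, and $M$ would be homeomorphic to $\mathbb{R}^3$, against the hypothesis. Now let $\{M_n\}_{n\in\mathbb{N}}$ be the exhaustion of $M$ given by Proposition~\ref{toric}: by construction every component of $M\setminus M_n$ is unbounded and each $\partial M_n$ is a finite union of spheres and tori. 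After discarding finitely many terms we may assume that $M_0$ — hence also every $M_n\supseteq M_0$ — is not contained in any $3$-ball of $M$.

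\emph{Eliminating spheres.} Fix $n$ and suppose a component $S$ of $\partial M_n$ is a $2$-sphere. By irreducibility $S=\partial B$ for a $3$-ball $B\subseteq M$. Since $\inte M_n$ is connected and disjoint from $S$, either $\inte M_n\subseteq\inte B$, forcing $M_n\subseteq B$, which is impossible; or $\inte M_n\subseteq M\setminus B$, in which case, taking closures, $M_n\subseteq M\setminus\inte B$, so that $\inte B$ is disjoint from $M_n$. In the second case $\inte B$ is open in $M\setminus M_n$ by invariance of domain and closed in $M\setminus M_n$ (its frontier $S$ lies in $M_n$), hence $\inte B$ is a connected component of $M\setminus M_n$ — impossible, because $\inte B$ is bounded while all such components are unbounded. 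Therefore every component of every $\partial M_n$ is a torus.

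\emph{The dichotomy.} If for all sufficiently large $n$ every component of $\partial M_n$ is incompressible in $M$, then passing to such a subsequence (and relabelling, which only enlarges $M_0$ and preserves being an exhaustion) gives alternative~(2). Otherwise infinitely many $M_n$ carry a compressible boundary torus; for each of these, part~(2) of Lemma~\ref{solidtori} shows that $M_n$ is a solid torus (so $\partial M_n$ is a single torus), and I would pass to this subsequence and relabel. It remains to check that each $\partial M_n$ is incompressible in $M\setminus\inte M_0$. Let $D\subseteq M\setminus\inte M_0$ be a compressing disk for $\partial M_n$. If $\inte D\subseteq\inte M_n$ then $D$ is a meridian disk of the solid torus $M_n$ disjoint from $\inte M_0$; pushing $M_0$ slightly into its interior we may assume $D\cap M_0=\emptyset$, and then removing a small regular neighbourhood of $D$ from $M_n$ leaves a $3$-ball still containing $M_0$ — absurd. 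If instead $\inte D$ lies in a component $V$ of $M\setminus\inte M_n$, then $\partial D$ is essential on a torus component $T$ of $\partial V$; the manifold $V$ is irreducible (a $2$-sphere in $V$ bounds a $3$-ball in $M$ which, since $M_0$ lies in no ball, must be contained in $V$), so $T$ bounds a solid torus $W\subseteq V$, and a clopen argument as in the proof of Lemma~\ref{solidtori} forces $W=V$, contradicting that $V$ is unbounded. Hence alternative~(1) holds.

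\emph{Main difficulty.} The heart of the argument is the point-set bookkeeping: deciding on which side of a sphere or torus a given ball or solid torus must lie, and repeatedly invoking invariance of domain to conclude that a compact submanifold which fills up a boundary component of an unbounded complementary region must be all of that region. Extra care is required because the complementary pieces $M\setminus\inte M_n$ may have disconnected boundary, so one cannot simply quote Lemma~\ref{solidtori} verbatim. Besides Proposition~\ref{toric} and Lemma~\ref{solidtori}, the only external input is the classical consequence of Alexander's theorem that a monotone union of $3$-balls is homeomorphic to $\mathbb{R}^3$.
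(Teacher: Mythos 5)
Your proof is correct and follows essentially the same route as the paper: start from the exhaustion of Proposition~\ref{toric}, use Brown's monotone-union theorem to arrange that $M_0$ lies in no $3$-ball, rule out spherical boundary components via irreducibility, and split according to whether boundary tori eventually compress, using Lemma~\ref{solidtori} to get solid tori and a compressing-disc surgery to derive a contradiction from a compression in $M\setminus\inte(M_0)$. The only cosmetic difference is that the paper handles both positions of the compressing disc uniformly by surgering $M_n$ to a sphere-bounded piece and citing Lemma~\ref{solidtori}(1), whereas you treat the inside and outside cases by two separate (equally valid) arguments.
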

\begin{proof}
By Proposition~\ref{toric}, 
there exists an exhaustion  $\{M_n\}_{n\in\mathbb{N}}$ of $M$ such that
$\partial M_n$ is a finite union of spheres and/or tori  for every $n\in\mathbb{N}$.

Suppose that every $M_n$ is contained in a closed $3$-ball $B_n\subseteq M$. Then $M$ is an increasing union of $3$-balls,  
hence it is homeomorphic to $\mathbb{R}^3$~\cite{Browncells}, against our hypothesis. Up to discarding a finite number of $M_n$, we may thus assume that
$M_0$ is not contained in any $3$-ball in $M$. As a consequence, no $M_n$ is a closed $3$-ball, hence by Lemma~\ref{solidtori} all the boundary components
of every $M_n$ are tori.

Up to passing to a subsequence of $\{M_n\}_{n\in\mathbb{N}}$, if necessary, we may assume that either all the components of $\partial M_n$ are incompressible for every $n\in\mathbb{N}$, or every $M_n$ has at least one boundary component which is a compressible torus. In the former case, condition (2) of the statement is fulfilled, and we are done.

In the latter case, Lemma~\ref{solidtori} implies  that every $M_n$ is a solid torus.
In order to conclude, suppose by contradiction that there exists $n\in\mathbb{N}$ such that  $\partial M_n$ is compressible in $M\setminus \inte(M_0)$, and let
$D\subseteq M\setminus M_0$ be a compression disc for $\partial M_n$. Let also $N$ be obtained from $M$ by adding (if $D\subseteq M\setminus \inte(M_n)$)
of removing (if $D\subseteq M_n$) a small tubular neighbourhood of $D$ avoiding $M_0$. By construction $\partial N$ is a sphere, hence by Lemma~\ref{solidtori}
$N$ is a closed $3$-ball. But this contradicts the fact that $M_0$ is not contained in any $3$-ball, thus concluding the proof.
\end{proof}

Classically, an open $3$-manifold $M$ is called \emph{eventually end irreducible} if there exist a compact subset $K\subseteq M$ and an exhaustion 
$\{M_n\}_{n\in\mathbb{N}}$ of $M$
such that every component of $\partial M_n$ is incompressible in $M\setminus K$ (see e.g.~\cite{embrown}). According to this terminology, Proposition~\ref{irreducible:prop} ensures
that an open irreducible $3$-manifold with finite simplicial volume is eventually end irreducible. Eventually end irreducible manifolds are somewhat more manageable than generic open $3$-manifolds. However, as observed in~\cite[Figure 1]{embrown}, it is possible to construct a contractible open $3$-manifold (which, moreover, embeds in $\mathbb{R}^3$) 
which is not eventually end irreducible.

In order to apply the results proved so far to contractible manifolds, we need the following lemma.

\begin{lemma}\label{irreducible}
Let $M$ be an open contractible $3$-manifold. Then $M$ is irreducible.
\end{lemma}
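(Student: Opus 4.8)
The plan is to show that an arbitrary embedded $2$-sphere $S\subseteq M$ bounds a closed $3$-ball. First I would observe that, since $M$ and $S$ are orientable, $S$ is two-sided in $M$ and admits a bicollar; and since $M$ is simply connected, $S$ must separate $M$, because a loop meeting $S$ transversely in a single point would be non-null-homotopic. Hence $M\setminus S$ has exactly two components, whose closures $\overline A$ and $\overline B$ are $3$-manifolds with boundary $S$ satisfying $M=\overline A\cup_S\overline B$. Thickening $\overline A$ and $\overline B$ along the bicollar produces open sets whose intersection is homotopy equivalent to $S^2$, so van Kampen's theorem yields $1=\pi_1(M)=\pi_1(\overline A)\ast\pi_1(\overline B)$, and therefore $\pi_1(\overline A)=\pi_1(\overline B)=1$.

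Next I would feed the decomposition $M=\overline A\cup_S\overline B$ into the reduced Mayer--Vietoris sequence; since $M$ is contractible this gives $\widetilde H_k(\overline A)\oplus\widetilde H_k(\overline B)\cong\widetilde H_k(S^2)$ for every $k$, so one of the two pieces, say $\overline A$, is acyclic, hence (being also simply connected) contractible. To see that $\overline A$ is compact, I would double it along $S$: the Mayer--Vietoris sequence of $D\overline A=\overline A\cup_S\overline A$ gives $\widetilde H_3(D\overline A)\cong\widetilde H_2(S^2)\cong\mathbb Z$, but a connected non-compact $3$-manifold without boundary has vanishing third integral homology, so $D\overline A$ is compact, and hence so is the closed subset $\overline A$.

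Finally, $D\overline A$ is a closed, simply connected $3$-manifold, so the Poincar\'e conjecture gives $D\overline A\cong S^3$; thus $\overline A$ is homeomorphic to the closure of a complementary region of a bicollared $2$-sphere in $S^3$, which is a closed $3$-ball by Alexander's theorem. Hence $S=\partial\overline A$ bounds the ball $\overline A$, and as $S$ was arbitrary this proves that $M$ is irreducible.

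The separation step and the Mayer--Vietoris bookkeeping are routine; the part that requires the most care, and the only place where genuinely $3$-dimensional input is used, is the combination of the homological argument showing that one side of $S$ is compact with the appeal to the Poincar\'e conjecture to identify its double with $S^3$. (An alternative route to the compactness of one side: a contractible open manifold of dimension $\ge 2$ has exactly one end -- e.g.\ because $H^1_c(M)\cong H_2(M)=0$ by Poincar\'e--Lefschetz duality -- which forces exactly one of $\overline A,\overline B$ to be bounded, after which the same capping-off argument applies.)
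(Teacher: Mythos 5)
Your proof is correct, and its skeleton (separate along $S$, apply Van Kampen, identify one side as a ball via the Poincar\'e conjecture) matches the paper's. The one genuine divergence is how you establish that one complementary piece is compact: the paper simply quotes Stallings' result that a contractible manifold has a single topological end, so that one component of $M\setminus S$ must be relatively compact, whereas you first isolate the acyclic side via Mayer--Vietoris and then detect its compactness homologically, from $H_3$ of its double (you also note the one-end route as an alternative, which is exactly the paper's). Your version is more self-contained and pins down \emph{which} side is the ball from the homology, at the cost of an extra doubling step; the paper's is shorter but outsources the compactness to a citation. Your endgame also differs slightly: you pass to $D\overline A\cong S^3$ and invoke Schoenflies (for a bicollared sphere this is really Brown's generalized Schoenflies theorem rather than Alexander's PL statement, though in dimension $3$ the distinction is harmless), while the paper argues directly that a compact simply connected $3$-manifold bounded by a $2$-sphere is a homotopy $3$-cell and hence, by the Poincar\'e conjecture, a genuine $3$-ball. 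Both endgames rest on the same input and are equally valid.
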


\begin{proof}
Let $S$ be a $2$-sphere in $M$. Since $S$ is nullhomologous, $M\setminus S$ is the union of two connected components
$M_1$, $M_2$.
Since a contractible manifold has only one topological end~\cite[Proposition 3.2]{Stallings:end}, 
at least one of the $M_i$, say $M_1$, must be relatively compact. 
By Van Kampen Theorem, we know that $\pi_1(\overline{M}_i)\cong \pi_1(M_i)=\{1\}$ for $i=1,2$. 
Therefore, $\overline{M}_1$ is a compact simply connected manifold whose boundary consists of a $2$-sphere. The Poincar\'e Conjecture now implies that there do not
exist fake $3$-cells, hence $\overline{M}_1$ is homeomorphic to a closed $3$-ball. This shows that $M$ is irreducible.
\end{proof}

\begin{cor}\label{contractible:cor}
Let $M$ be a contractible $3$-manifold not homeomorphic to $\mathbb{R}^3$ and such that $\|M\|<+\infty$. Then $M$ admits an exhaustion $\{M_n\}_{n\in\mathbb{N}}$ such that the following conditions hold:
\begin{enumerate}
\item $M_0$ is not contained in any $3$-ball in $M$;
\item Each $M_n$, $n\in\mathbb{N}$, is homeomorphic to a solid torus;
\item For every $n\in\mathbb{N}$, the torus $\partial M_n$ is incompressible in $M\setminus \inte (M_0)$;
\item For every $n\in\mathbb{N}$, the inclusion $M_n\hookrightarrow M_{n+1}$ induces the trivial map $\pi_1(M_n)\to\pi_1(M_{n+1})$.
\end{enumerate} 
\end{cor}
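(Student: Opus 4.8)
The plan is to deduce the statement from Proposition~\ref{irreducible:prop} and Lemma~\ref{irreducible}, after ruling out the ``incompressible tori'' alternative by an elementary fundamental-group observation, and then passing to a suitable subsequence of the exhaustion. First I would note that a contractible $3$-manifold (which for us is without boundary) cannot be closed, since a closed manifold has non-trivial top homology; hence $M$ is open, Lemma~\ref{irreducible} applies, and $M$ is irreducible. As $M$ is not homeomorphic to $\mathbb{R}^3$ and $\|M\|<+\infty$, Proposition~\ref{irreducible:prop} furnishes an exhaustion $\{M_n\}_{n\in\mathbb{N}}$ with $M_0$ not contained in any $3$-ball of $M$ and satisfying either alternative (1) or alternative (2) of that proposition. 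I would then observe that alternative (2) is impossible: each $M_n$ has non-empty boundary (otherwise, being a compact manifold without boundary, $M_n$ would be open and closed in the connected non-compact manifold $M$, hence equal to $M$, a contradiction), so under (2) some component of $\partial M_n$ would be an incompressible torus, i.e.\ a $\pi_1$-injective one by Dehn's Lemma; but $\pi_1(M)$ is trivial and cannot contain $\mathbb{Z}^2$. Therefore alternative (1) holds, and this is exactly conditions (1)--(3) of the statement: each $M_n$ is a solid torus, and every $\partial M_n$ is incompressible in $M\setminus\inte(M_0)$.

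The only remaining, and genuinely substantive, point is condition (4). For each $n$ the inclusion $M_n\hookrightarrow M_{n+1}$ induces a homomorphism $\pi_1(M_n)\to\pi_1(M_{n+1})$ between groups isomorphic to $\mathbb{Z}$, so this map is either injective or zero. I would argue that it is zero for infinitely many $n$: since every loop and every homotopy of loops in $M$ has compact image, hence lies in some $M_n$, we have $\pi_1(M)=\varinjlim_n\pi_1(M_n)$ with respect to the inclusion-induced maps; if only finitely many of these maps were zero, there would be an $N$ such that $\pi_1(M_N)\to\pi_1(M_m)$ is injective for every $m\ge N$, and in a directed colimit an element of $\pi_1(M_N)$ that dies in $\pi_1(M)$ must already die at some finite stage, so $\pi_1(M_N)$ would inject into $\pi_1(M)=1$ — contradicting $\pi_1(M_N)\cong\mathbb{Z}$.

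To conclude I would pass to the subsequence $\{M_{n_k}\}_{k\in\mathbb{N}}$ indexed by the (infinitely many) values $n$ for which $\pi_1(M_n)\to\pi_1(M_{n+1})$ is trivial, and relabel. A subsequence of an exhaustion is again an exhaustion; each term is still a solid torus; the new first term contains the original $M_0$, hence is still not contained in any $3$-ball; the boundaries $\partial M_{n_k}$, being incompressible in $M\setminus\inte(M_0)$, remain incompressible in the complement of the new first term, because that complement is contained in $M\setminus\inte(M_0)$; and each new inclusion $M_{n_k}\hookrightarrow M_{n_{k+1}}$ factors through $M_{n_k}\hookrightarrow M_{n_k+1}$, whose induced map on fundamental groups is zero by the choice of $n_k$, so it too is trivial. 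Since the conceptual work has already been done in Proposition~\ref{irreducible:prop}, I expect the main (though routine) obstacle to be precisely this bookkeeping — checking that the chosen subsequence still satisfies (1)--(3) while now also satisfying (4); the one genuine idea is the colimit argument that produces infinitely many $\pi_1$-trivial inclusions.
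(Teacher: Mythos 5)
Your proposal is correct and follows essentially the same route as the paper: irreducibility via Lemma~\ref{irreducible}, ruling out the incompressible-tori alternative of Proposition~\ref{irreducible:prop} by simple connectivity, and then passing to a subsequence to obtain $\pi_1$-trivial inclusions. The only (immaterial) difference is that for condition (4) the paper argues directly that each generator of $\pi_1(M_n)$ bounds a compact homotopy and hence dies in some $M_{i(n)}$, whereas you package the same compactness fact as a directed-colimit argument; both are valid and the bookkeeping for the subsequence is handled correctly.
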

\begin{proof}
By Lemma~\ref{irreducible}, the manifold $M$ is irreducible. Moreover, being simply connected, it cannot contain incompressible tori, hence 
by Proposition~\ref{irreducible:prop} 
there exists an exhaustion  $\{M_n\}_{n\in\mathbb{N}}$ of $M$ satisfying conditions (1), (2) and (3) of the statement. 

In order to prove (4), let $\gamma_n$ be a generator of $\pi_1(M_n)\cong\mathbb{Z}$. Since $M$ is simply connected and the image of any homotopy between a representative of $\gamma$ and the constant map is compact, there exists $i(n)\geq n$ such that the inclusion $M_n\hookrightarrow M_{i(n)}$ induces the trivial map $\pi_1(M_n)\to\pi_1(M_{i(n)})$. Therefore, up to replacing $\{M_n\}_{n\in\mathbb{N}}$ with an exhaustion $\{M_{j(n)}\}_{n\in\mathbb{N}}$, where $j\colon\mathbb{N}\to \mathbb{N}$ is strictly increasing,
we may assume that also condition (4) of the statement holds. 
\end{proof}

\begin{prop}\label{exhaustion}
Let $\{M_n\}_{n\in\mathbb{N}}$ be an exhaustion of the open $3$-manifold $M$, and for every 
$n\in\mathbb{N}$ let $W_n=M_n\setminus \inte{M_{n-1}}$, where we understand that $M_{-1}=\emptyset$ (hence $W_0=M_0$).
Suppose that the following conditions hold:
\begin{enumerate}
\item for every $n\in\mathbb{N}$,
$\partial M_n$ is a finite union of spheres and/or tori;
\item for every $n\in\mathbb{N}$, every toric component of $\partial M_n$ is incompressible in $M\setminus \inte (M_0)$;
\item 
$\|W_n,\partial W_n\|> 0$ for infinitely many $n\in\mathbb{N}$.
\end{enumerate}
Then  $\|M\|=+\infty$.
\end{prop}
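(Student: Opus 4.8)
The plan is to argue by contradiction. Suppose $\|M\|<+\infty$, fix a locally finite fundamental cycle $c\in C_3^{\lf}(M)$ with $L:=\|c\|_1<+\infty$, and let $k>0$ be the constant of Lemma~\ref{general2:lemma}. The aim is to produce inside $c$ infinitely many subchains with pairwise disjoint sets of simplices, each of $\ell^1$-norm at least $k$; since their norms add up to at most $L$, this is the desired contradiction. The first ingredient — and the one I expect to be delicate to set up correctly — is a reorganization of the exhaustion that confines the simplices of $c$. Since $c$ is locally finite, for each $n$ only finitely many simplices of $c$ meet $M_n$; writing $a(\sigma)=\min\{n:\supp(\sigma)\cap M_n\neq\emptyset\}$ and $b(\sigma)=\min\{n:\supp(\sigma)\subseteq M_n\}$ for a simplex $\sigma$ of $c$ (the latter finite by compactness of supports), I would set $n_0=0$ and recursively choose $n_{i+1}$ to be \emph{any} index exceeding $2+\max\{b(\sigma):a(\sigma)\leq n_i\}$ (a finite maximum), and pass to the exhaustion $\{M_{n_i}\}_{i\in\mathbb{N}}$. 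Its shells $W'_i:=M_{n_i}\setminus\inte(M_{n_{i-1}})$ still satisfy (1) and (2), as $\partial W'_i\subseteq\partial M_{n_i}\cup\partial M_{n_{i-1}}$; they moreover have the property that each simplex of $c$ meets at most two \emph{consecutive} shells, because $\supp(\sigma)\cap W'_j\neq\emptyset$ forces $a(\sigma)\leq n_j$, hence $b(\sigma)\leq n_{j+1}-2$, hence $\supp(\sigma)\subseteq\inte(M_{n_{j+1}})$, so $\supp(\sigma)$ misses $W'_{j'}$ for all $j'\geq j+2$. Using the remaining freedom in the choice of the $n_i$, and recalling that infinitely many original shells have positive relative simplicial volume, I would also arrange that every $W'_i$ with $i\geq 1$ contains in its interior an original shell $W_{m_i}$ with $\|W_{m_i},\partial W_{m_i}\|>0$.

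Next I would localize $c$ to a shell and estimate. Fix $i\geq 4$, put $W':=W'_i$ and $W:=W_{m_i}\subseteq\inte(W')$, and let $c'=c|_{W'}$ be the finite subchain of simplices of $c$ meeting $W'$. By the previous step $\supp(c')\subseteq W'_{i-1}\cup W'_i\cup W'_{i+1}$, while $\partial c'=-\partial(c-c')$ is supported off $W'$; hence $c'$ represents a class in $H_3(\inte(X),\inte(X)\setminus\inte(W'))$ for any compact codimension-$0$ thickening $X\subseteq M\setminus\inte(M_0)$ of $W'_{i-1}\cup W'_i\cup W'_{i+1}$ (such an $X$ exists precisely because $i\geq 4$), and, by comparing local orientation classes at a point of each component of $\inte(W')$, this class is the image of the relative fundamental class $[W',\partial W']$ under the excision isomorphism $H_3(W',\partial W')\to H_3(\inte(X),\inte(X)\setminus\inte(W'))$. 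By hypothesis (2) the toric components of $\partial W'$ are incompressible in $M\setminus\inte(M_0)$, hence in $X$, hence $\pi_1$-injective in $\inte(X)$; since spheres and tori have amenable fundamental group, Proposition~\ref{amenable:prop} applies with ambient $\inte(X)$ and submanifold $W'$ and makes the above excision isomorphism isometric. Therefore $\|c'\|_1\geq\|[W',\partial W']\|_1=\|W',\partial W'\|$. Running the same argument with $W\subseteq\inte(W')$ playing the role of $W'\subseteq\inte(X)$ gives $\|W',\partial W'\|\geq\|W,\partial W\|$, and since $\partial W$ is a union of spheres and tori with $\|W,\partial W\|\neq 0$, Lemma~\ref{general2:lemma} yields $\|W,\partial W\|\geq k$. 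Hence $\|c|_{W'_i}\|_1\geq k$ for every $i\geq 4$.

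To finish, take the shells $W'_4,W'_7,W'_{10},\dots$: since each simplex of $c$ meets at most two consecutive shells, the subchains $c|_{W'_{3\ell+1}}$ (for $\ell\geq 1$) involve pairwise disjoint sets of simplices, so $L=\|c\|_1\geq\sum_{\ell\geq 1}\|c|_{W'_{3\ell+1}}\|_1\geq\sum_{\ell\geq 1}k=+\infty$, a contradiction; hence $\|M\|=+\infty$. The crux of the argument — the step most likely to cause trouble — is the appeal to Proposition~\ref{amenable:prop}: one cannot use $M$ itself as the ambient manifold, because $\partial M_n$ is only assumed $\pi_1$-injective in $M\setminus\inte(M_0)$ and, in the cases that matter (e.g.\ $M$ contractible), genuinely fails to be $\pi_1$-injective in $M$; indeed, by Remark~\ref{counter:ex} the estimate $\|c|_{W'}\|_1\geq\|W',\partial W'\|$ obtained through $H_3(M,M\setminus\inte(W'))$ would be false. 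The entire purpose of the reorganization of the exhaustion is to force the simplices of $c$ that meet a given shell to lie in a bounded piece of $M\setminus\inte(M_0)$, where Proposition~\ref{amenable:prop} is available. (One small point glossed over above: the shells $W'_i$ need not be connected, which is dealt with by working one connected component at a time.)
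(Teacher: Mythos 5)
Your proof is correct and follows essentially the same strategy as the paper's: use local finiteness to spread out the shells so that the restricted subchains of the fundamental cycle are pairwise disjoint, apply Proposition~\ref{amenable:prop} with an ambient manifold contained in $M\setminus\inte(M_0)$ (where hypothesis (2) supplies $\pi_1$-injectivity), and invoke the uniform gap of Lemma~\ref{general2:lemma}. The differences are only in packaging: the paper applies Proposition~\ref{amenable:prop} directly with the noncompact ambient $M\setminus\inte(M_0)$ and with $N$ a single positive-volume \emph{component} of a shell, which renders your compact thickening $X$ unnecessary and also sidesteps the one slightly delicate link in your chain of inequalities, namely $\|c'\|_1\geq\|W',\partial W'\|$ for disconnected $W'$ (the componentwise bounds from Proposition~\ref{amenable:prop} need not add up because a simplex may meet several components; restricting $c$ to one component $N$ with $\|N,\partial N\|\geq k$ gives the needed bound $\|c'\|_1\geq\|c|_{N}\|_1\geq k$ directly).
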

\begin{proof}
We need to show that $\|z\|_1=+\infty$ for every locally finite fundamental cycle for $M$. 
Thus, let $z$ be such a cycle and
for every $n\in\mathbb{N}$ set 
$z_n=z|_{W_{n}}$.

We define a strictly increasing sequence $\{n_i\}_{i\in\mathbb{N}}\subseteq \mathbb{N}$
as follows. First of all, observe that $\supp(z_0)$ is compact, hence there exists ${n}$ such that $\supp(z_0)\subseteq \inte(M_{{n}})$.
Choose $n_0>{n}$ such that $\|W_{n_0},\partial W_{n_0}\|>0$. Since $W_{n_0}$ is compact, $\supp(z_{n_0})$ is also compact,
hence there exists (a new) $n>n_0$ such that $\supp(z_{n_0})\subseteq  \inte(M_n)$ (hence, $\supp(z_0)\cup \supp(z_{n_0})\subseteq  \inte(M_n)$). We then choose $n_1>n$ such that $\|W_{n_1},\partial W_{n_1}\|>0$. We may now iterate this
 contruction as follows: once $n_0,n_1,\dots,n_i$ have been chosen, there exists $n>n_i$ such that $\supp(z_0)\cup \supp(z_{n_0})\cup \dots \cup \supp(z_{n_i})\subseteq \inte(M_n)$.
 We then choose $n_{i+1}>n$ such that $\|W_{n_{i+1}},\partial W_{n_{i+1}}\|>0$.
 
By construction, no singular simplex may appear with a non-null coefficient both in $z_{n_i}$ and in $z_{n_j}$ for $i\neq j$,
hence
\begin{equation}\label{somma:eq}
\|z\|_1\geq \sum_{i=0}^\infty \|z_{n_i}\|_1\ .
\end{equation}

In order to conclude it is thus sufficient to show that $\|z_{n_i}\|_1\geq k$ for every $i\in\mathbb{N}$,
where $k$ is the constant provided by Lemma~\ref{general2:lemma}.
 Let us fix $i\in\mathbb{N}$, and let $N_i$ be  a component
of $W_i$ with $\|N_i,\partial N_i\|>0$ (hence, $\|N_i,\partial N_i\|\geq k$). 
We set $M'=M\setminus \inte(M_0)$. 
By construction, the chain $z_{n_i}$ is supported in $M'$. Moreover,
since $\partial z=0$ we have $\partial z_{n_i}=-\partial (z-z_{n_i})$, hence $\partial z_{n_i}$ is supported outside $N_i$. As a consequence, if 
$z'_{n_i}$ is the image of $z_{n_i}$ under the quotient map $C_3(M')\to C_3(M', M'\setminus \inte(N_i))$, then
$z'_{n_i}$ defines a class $[z'_{n_i}]\in H_3(M', M'\setminus\inte(N_i))$. 

Let now $\psi_3\colon H_3(N_i,\partial N_i)\to H_3(M', M'\setminus \inte(N_i))$ be the map induced by the inclusion. It is immediate
to realize that, for every $x\in \inte(N_i)$, the chain $z_{n_i}$ projects to the positive generator of $H_3(M,M\setminus\{x\})$, and this suffices to show that
$\psi_3([N_i,\partial N_i])=[z'_{n_i}]$. Thanks to the assumptions in the statement, Proposition~\ref{amenable:prop} (applied to the submanifold
$N_i$ of the ambient manifold $M'$) now implies that the map $\psi_3$ is an isometric isomorphism, hence 
$$
\|z_{n_i}\|_1\geq \|z'_{n_i}\|_1\geq \|[z'_{n_i}]\|_1=\|\psi_3([N_i,\partial N_i])\|_1=\|[N_i,\partial N_i]\|_1\geq k\ .
$$
This concludes the proof.
\end{proof}

\section{On $2$-component graph links}\label{link:sec}
In this paper we will consider only knots and links in $S^3$, and every knot and link will be oriented. If
$L= K_1 \cup \ldots \cup K_n \subset S^3$ is an $n$-component link, then we denote by $N(L)=N(K_1) \cup \ldots \cup N(K_n)$ a closed tubular neighborhood of $L$,
and by $X(L)=S^3 \setminus \inte(N(L))$ the \textit{link exterior} of $L$. 
If $K_1,K_2$ are disjoint knots in $S^3$, we denote by $\lk(K_1,K_2)$ the \emph{linking number} of $K_1$ and $K_2$ (we refer the reader e.g.~to~\cite{rolfsen} for some standard terminology in knot theory).
The main result of this section is the following:

\begin{thm}\label{lkgraphlink}
Let $L=K_1 \cup K_2$ be a non-split 2-component link in $S^3$ such that:
\begin{itemize}
    \item[(1)] $K_1$ is the unknot;
    \item[(2)] $\|X(L),\partial X(L)\|=0$.
\end{itemize}
Then $\lk(K_1, K_2) \neq 0$.
\end{thm}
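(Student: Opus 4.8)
The plan is to reduce the statement to the topology of graph manifolds and then to produce, inside the solid torus $S^3\setminus\inte N(K_1)$, an explicit iterated‑satellite (``cabling tower'') description of $K_2$ in which every winding number that occurs is manifestly positive.

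First I would dispose of some general position. Since $L$ is non-split, $X(L)$ is irreducible (an essential $2$-sphere in $X(L)$ bounds a ball in $S^3$ which, containing some but not all components of $L$, would split it) and both of its boundary tori are incompressible: a compressing disc for $\partial N(K_i)$ lives in $S^3\setminus\inte N(K_i)$ and, being essential there, would force $K_i$ to be the unknot and the disc to be a meridian disc of the resulting solid torus missing the other component, so that $L$ would again be split. As $X(L)$ is moreover bounded by tori, the additivity of the simplicial volume along the JSJ decomposition together with Gromov's and Thurston's identification of the simplicial volume of a hyperbolic piece with a fixed multiple of its Riemannian volume --- see e.g.~\cite[Corollary 7.8]{frigerio:book} --- gives $\|X(L),\partial X(L)\|=v_3\sum_P\vol(P)$, the sum running over the hyperbolic pieces of the JSJ decomposition of $X(L)$. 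Hence hypothesis $(2)$ forces $X(L)$ to be a graph manifold.

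Now I would use hypothesis $(1)$. Since $K_1$ is the unknot, $V:=S^3\setminus\inte N(K_1)$ is a solid torus, $K_2$ is a knot in $\inte V$, we have $X(L)=V\setminus\inte N(K_2)$, and $\lk(K_1,K_2)$ equals the winding number $w:=[K_2]\in H_1(V)\cong\mathbb{Z}$ of $K_2$ in $V$; by the previous paragraph $\partial V$ is an incompressible boundary torus of the graph manifold $X(L)$. Assume, for a contradiction, that $w=0$, so that $\pi_1(\partial N(K_2))\to\pi_1(V)$ is trivial and $K_2$ is not contained in a ball (as $L$ is non-split). The companion tori of $K_2$ in $V$ then cut $V$ into a nested family of solid tori $V=V_0\supsetneq V_1\supsetneq\dots\supsetneq V_r\supseteq N(K_2)$, each $V_j$ a regular neighbourhood of a knot $C_j\subseteq\inte V_{j-1}$ (with $C_{r+1}:=K_2$); thus $K_2$ is obtained from the core of $V$ by iterated satellite operations and, winding number being multiplicative under satellites, $w=\prod_{j}w_j$, where $w_j$ is the winding number of $C_j$ in $V_{j-1}$. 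Because $X(L)$ is a graph manifold with $\partial V$ incompressible, each companion piece $\overline{V_{j-1}\setminus V_j}$ is Seifert fibred; since re-gluing the solid torus $V_j$ must return the solid torus $V_{j-1}$, and since a Seifert fibred space over a disc with two or more cone points has incompressible boundary and hence is not a solid torus, each such piece is Seifert fibred over an annulus with at most one cone point --- that is, it is a cable space (or a product $T^2\times I$). Consequently $w_j\ge 1$ for every $j$, so $w=\prod_j w_j\ge 1$, contradicting $w=0$; therefore $\lk(K_1,K_2)\neq 0$.

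I expect the main obstacle to be precisely the last step: turning the bare graph-manifold hypothesis on $X(L)$ into the explicit cabling tower and, above all, excluding Seifert pieces that might contribute a vanishing multiplicity. This is where both hypotheses of the theorem are genuinely used --- the incompressibility of $\partial V$ (equivalently, non-splitness of $L$) and the fact that $V$ is a \emph{solid} torus --- since a winding-zero ``turn-around'' pattern such as the Whitehead pattern is exactly one whose complement in a solid torus fails to be a graph manifold. An alternative, which trades this hands-on analysis for structural bookkeeping, is to invoke Eisenbud--Neumann's splice calculus: a non-split graph link in $S^3$ has a splice diagram whose edge weights are positive integers, and $\lk(K_1,K_2)$ is then computed as a product of such weights, hence is a positive integer.
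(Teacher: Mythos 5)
Your first reduction coincides with the paper's: additivity of the simplicial volume under connected sums and JSJ tori shows that hypothesis (2) forces $X(L)$ to be a graph manifold, i.e.\ $L$ to be a graph link, and non-splitness gives incompressibility of $\partial V$. The gap is in the structural claim that carries the rest of the argument. It is not true that the JSJ tori of $X(L)$ cut the solid torus $V=S^3\setminus \inte(N(K_1))$ into a nested chain of solid tori whose consecutive differences are cable spaces; that picture only describes the case in which $K_2$ is an iterated cable of the core of $V$. If $K_2$ has a knotted connected summand contained in a ball (e.g.\ $K_2=C\# J$ with $C$ a cable of the core of $V$ and $J$ a trefoil --- a perfectly good non-split graph link with $K_1$ trivial), then the JSJ decomposition of $X(L)$ contains a composing space $P\times S^1$ ($P$ a disc with two holes, so \emph{three} boundary tori) and the exterior of $J$ (one boundary torus, not a cable space). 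Moreover the two swallow--follow tori of the decomposition are disjoint companion tori of $K_2$ bounding solid tori \emph{neither of which contains the other}, so "disjoint companion tori are nested" already fails. Consequently the decomposition $w=\prod_j w_j$ over cable spaces has no meaning in general, and the step "each piece reglues to a solid torus, hence is a cable space with $w_j\geq 1$" does not apply to the pieces that actually occur. (Even in the genuinely nested case there is a smaller unproved point: the region between consecutive companion tori need not be a single Seifert piece.)

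The paper avoids this by trading the JSJ analysis for Eisenbud--Neumann's recursive characterization of graph links (every graph link is built from the unknot by iterated cabling and connected sum) and inducting on that construction; the connected-sum case --- exactly the one your cable tower misses --- is handled by the elementary observation that summing a knot onto a component does not change $\lk(K_1,K_2)$, so the induction passes to a smaller non-split link with a trivial component. Your closing "alternative" via splice diagrams is essentially this route, but as stated it is not correct: splice-diagram weights need not be positive (they can be zero or negative, zero weights arising precisely from $(1,0)$- and $(0,1)$-cablings), and the whole content of the theorem is to use non-splitness and the triviality of $K_1$ to exclude a zero factor in the product formula for the linking number. So your approach can be completed, but the completion is in substance the paper's induction rather than the nested cable-space picture.
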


We begin by recalling some results about graph links. Following~\cite{EN85}, we say that a link $L$ is \emph{graph link} if and only if the link exterior $X(L)$ is a connected
sum of graph manifolds (see e.g.~\cite{AFW} for some standard terminology in $3$-manifold theory). Since the simplicial volume is additive with respect
to connected sums and gluings along incompressible tori~\cite{Gromov, BBFIPP}, this condition is equivalent to the fact that
$\|X(L),\partial X(L)\|=0$ (see e.g.~\cite{Soma}).
We now define two operations on links:
    
    \smallskip
    
    {\bf Connected sum}: Let $L=K_1\cup \ldots \cup K_n$ and $L'=K_1' \cup \ldots K_m'$ be disjoint oriented links in $S^3$ and let $\Sigma \subset S^3$ be a $2$-sphere; let $C_1$ and $C_2$ be the two connected components in which $\Sigma$ separates $S^3$. We can isotope $L$ and $L'$ so that they are contained respectively in $C_1$ and $C_2$. Let us fix one component on each link, say $K_1$ and $K_1'$. Let $Q \simeq I \times I$ be a square such that $Q \cap \Sigma \simeq \{\frac{1}{2}\} \times I$, $Q \cap L \simeq \{0\} \times I \subset K_1$ and $Q \cap L' \simeq \{1\} \times I \subset K_1'$, in such a way that the boundary orientation of $\partial Q$ agrees with the ones of $K_1$ and $K_1'$ along
    $Q\cap K_1$ and $Q\cap K'_1$. Then, the closure of $(K_1 \cup \partial Q \cup K_1')\setminus (Q\cap (K_1\cup K'_1))$ is isotopic to the connected sum $K_1 \# K_1'$. The connected sum of the links $L$ and $L'$ along the components $K_1$ and $K_1'$ is the resulting link $L\# L'= K_1 \# K_1' \cup K_2 \ldots \cup K_n \cup K_2' \ldots \cup K_m'$.
    \smallskip
    
   {\bf Cabling}:  Let $L=K_1\cup \ldots \cup K_n$ be a link in $S^3$. Let $i \in \{1, \ldots, n\}$  and let $N_i$ denote a tubular neighborhood of $K_i$. Let also $\mu_i,\lambda_i\subseteq \partial N_i$ be the  meridian and the longitude of the link component $K_i$. Let $(p,q)$ be a pair of coprime integers different from $(0,0)$ (we understand that 0 and 1 are coprime). Let $d$ be a natural number. We denote by $dK_i(p,q)$ the union of $d$ parallel curves on $\partial N_i$, each isotopic in $\partial N_i$ to $p\lambda_i+\ q\mu_i$. The operation of replacing $L$ with either $L \cup dK_i(p,q)$ or with $L \cup dK_i(p,q) \setminus K_i$ is called a $(p,q)$-cabling on $L$ along $K_i$ (respectively without remotion of the core or with remotion of the core). 

 Let us now define recursively certain classes of links in $S^3$:

\begin{itemize}
    \item $\mathcal{C}_0$ is the class that contains only the unknot;
    \item a link $L$ belongs to $\mathcal{C}_j$ if and only if it is obtained via a cabling of an element in $\mathcal{C}_i$ with $i<j$ or is obtained as a connected sum of $L_1 \in \mathcal{C}_{j_1}$ and $L_2 \in \mathcal{C}_{j_2}$, with $j_1, j_2 < j$;
    \item  $\mathcal{C}= \bigcup_{i=0}^{+\infty}\mathcal{C}_{i}$.
\end{itemize}

The following characterization of graph links due to Eisenbud and Neumann plays a crucial r\^ole in the proof of Theorem~\ref{lkgraphlink}.

\begin{thm}[{\cite[Theorem 9.2]{EN85}}]\label{grlksolv}
$L$ is a graph link if and only if  $L \in \mathcal{C}$.
\end{thm}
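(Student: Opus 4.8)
The plan is to prove the two inclusions separately, the inclusion $\mathcal{C}\subseteq\{\text{graph links}\}$ being the easy one. I would argue it by induction on the least $j$ with $L\in\mathcal{C}_j$. In the base case the unknot has exterior a solid torus, which is Seifert fibred, hence a graph manifold. For the inductive step it suffices to check that both defining operations preserve, at the level of exteriors, the class of connected sums of graph manifolds. If $L$ is obtained from $L'\in\mathcal{C}_i$ by a $(p,q)$-cabling of multiplicity $d$ along a component, then $X(L)$ is obtained from $X(L')$ by gluing, along the boundary torus $\partial N_i$, a Seifert fibred \emph{cable space} (the exterior, inside the solid torus $N_i$, of $d$ parallel $(p,q)$-curves together with the possibly removed core); since attaching a Seifert piece along a torus keeps us inside the class of connected sums of graph manifolds, $X(L)$ is a graph link. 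If instead $L=L_1\# L_2$, then $X(L)$ is the union of $X(L_1)$ and $X(L_2)$ along a meridional annulus, while a disjoint union $L_1\sqcup L_2$ produces the connected sum $X(L_1)\# X(L_2)$; in either case the result is again a connected sum of graph manifolds.

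For the reverse inclusion I would induct on a complexity invariant of $X(L)$, for instance the number of pieces of its JSJ decomposition together with the number of components of $L$, systematically translating each canonical decomposition of the $3$-manifold $X(L)$ into an operation on links. First, essential spheres: if $X(L)$ is reducible then $L$ is split and $X(L)$ is the connected sum of the exteriors of its nonsplit sublinks, so one reduces to these (the disjoint union being accounted for by the connected-sum clause). Next, if $L$ is composite then an essential meridional annulus, equivalently a $2$-sphere meeting $L$ transversely in two points, records a connected-sum decomposition $L=L_1\# L_2$; passing to the prime factors again lowers the complexity. One may thus assume $L$ nonsplit and prime, so that $X(L)$ is irreducible and, being a graph link, has every JSJ piece Seifert fibred.

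It remains to read off a sequence of cablings. If $X(L)$ has no JSJ tori it is itself Seifert fibred; the classification of Seifert fibred spaces embedded in $S^3$ then forces $L$ to be a Seifert link, i.e.\ a family of fibres of a Seifert fibration of $S^3$, which is exactly a single cabling of the unknot, so $L\in\mathcal{C}_1$. Otherwise I would choose a Seifert piece $P$ adjacent to a boundary torus of $X(L)$. The crucial structural input is that, because $X(L)\subseteq S^3$, such a boundary-adjacent piece must be a cable space (a composing piece $S^1\times P$ for a disc-with-holes $P$ would record a connected sum, already excluded by primeness). Recognizing $P$ as a cable space lets me peel off the corresponding cabling: I replace the affected components by the core of $P$ to obtain a link $L'$ whose exterior is $X(L)$ with $P$ removed and capped off, a graph manifold with strictly fewer JSJ pieces. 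By induction $L'\in\mathcal{C}$, and since $L$ is recovered from $L'$ by a cabling, $L\in\mathcal{C}$.

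The main obstacle is precisely this geometric dictionary: identifying each boundary-adjacent Seifert piece of an $S^3$-link exterior as a cable (or composing) space and translating its Seifert invariants faithfully into the data $(p,q)$ and $d$ of a cabling, while checking that the fibre slopes match across the JSJ tori so that the induction strictly decreases complexity. This is exactly where embeddability in $S^3$ is indispensable, since it excludes the Seifert pieces that do not arise from cabling; it is the content of Eisenbud and Neumann's splice-diagram calculus, in which splicing along an unknot is precisely the combination of cabling and connected sum, and carrying it out rigorously requires the full classification of Seifert fibred submanifolds of $S^3$.
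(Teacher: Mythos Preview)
The paper does not prove this theorem at all: it is quoted verbatim from \cite[Theorem~9.2]{EN85} and used as a black box. Your sketch is therefore not being compared with any argument in the paper; rather, it is an outline of (essentially) the original Eisenbud--Neumann proof, and as such it is broadly correct.

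Two places where the sketch would need tightening. First, in the base case of the hard direction you assert that a Seifert link lies in $\mathcal{C}_1$. That is not quite true: a Seifert link in $S^3$ is a sublink of $\{O_1,O_2\}\cup d\cdot T_{p,q}$ (the two exceptional fibres together with several parallel regular fibres of a Seifert fibration of $S^3$), and realising a sublink that contains \emph{both} cores $O_1,O_2$ requires two successive cablings (first produce the Hopf link, then cable one core), so in general one only gets $L\in\mathcal{C}_2$. Second, the passage from split links to the connected-sum clause is not automatic, since $\mathcal{C}$ is closed under connected sum of links but not \emph{a priori} under disjoint union; one has to first manufacture a split unknotted component by a $(0,1)$-cabling before summing. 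Neither point is a genuine obstruction, but both would need to be written out.

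You have correctly located the real content: showing that each boundary-adjacent JSJ piece of an irreducible prime link exterior in $S^3$ is a cable space, and reading off the cabling parameters $(p,q,d)$ from its Seifert invariants so that peeling it off strictly decreases complexity. This is exactly the splice-diagram calculus developed in \cite{EN85}, and it does rely, as you say, on the classification of Seifert submanifolds of $S^3$.
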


\begin{proof}[Proof of Theorem~\ref{lkgraphlink}]
Let $L$ be any link satisfying the assumptions of the theorem. 
As observed above, the vanishing of $\|X(L),\partial X(L)\|$ implies that $L$ is a graph link, hence $L\in \mathcal{C}$ by Theorem~\ref{grlksolv}.
Therefore, in order to conclude it is sufficient to show, by induction on $i\in\mathbb{N}$, that the following holds: 
if $L=K_1\cup K_2$ is a non-split 2-component link in $\mathcal{C}_i$ such that $K_1$
 is the unknot, then $\lk(K_1,K_2)\neq 0$.

For $i=0$ there is nothing to prove, since $\mathcal{C}_0$ does not contain $2$-component links. Let us now suppose  $i \geq 1$.  
By definition of $\mathcal{C}_i$, the link
$L$ can be obtained in three different ways:

\begin{itemize}
    \item[(1)] via a $(p,q)$-cabling with $d=2$ and with remotion of the core along a knot $K \in \mathcal{C}_{j}$, $j<i$;
    \item[(2)] via a $(p,q)$-cabling with $d=1$ and without remotion of the core along a knot $K \in \mathcal{C}_{j}$, $j<i$;
    \item[(3)] via a connected sum of a link $L' \in \mathcal{C}_{j_1}$ and a knot $K' \in \mathcal{C}_{j_2}$ with both $j_1, j_2< i$.
\end{itemize}

In the first case,  we have $(p,q)\neq (0,1)$, because otherwise $L$ would be split. Therefore, $p\neq 0$, and each component of 
$L$ is a satellite of $K$. Since $K_1$ is the unknot and any satellite of a non-trivial knot is non-trivial, we then deduce that $K$ is the unknot. 
Thus $(p,q)\neq (1,0)$, because otherwise $L$ would be split, and $L$ is a torus link. 
As a consequence we have $\lk (K_1,K_2)\neq 0$, as desired.


In the second case we have $L=K\cup K(p,q)$. We first observe that $(p,q)\neq (1,0)$,
because otherwise either $L$ would be split (if $K$ is the unknot), or no component of $L$ would be trivial (if $K$ is non-trivial). Therefore, $q\neq 0$ and
$\lk(K_1,K_2)=\lk(K,K(p,q))=q\neq 0$, as desired.

In the third case, first observe that $L'$ is non-split (otherwise, also $L$ would be split), and that at least one component of $L'$ is the trivial knot  (since a connected sum of knots
is trivial only if both summands are trivial). Therefore, we can apply the inductive hypothesis to $L'=J_1 \cup J_2$ and deduce that
 $\lk(J_1, J_2) \neq 0$. But it is immediate to check that $\lk(K_1,K_2)=\lk(J_1,J_2)$, hence $\lk(K_1,K_2)\neq 0$, and this concludes the proof. 
 \end{proof}

\section{Proof of Theorem~\ref{main:thm}}\label{proof:sec}

We now prove Theorem~\ref{main:thm} arguing by contradiction. 
Let $M$ be a contractible  $3$-manifold not homeomorphic to $\mathbb{R}^3$ and 
suppose that $\|M\|<+\infty$.

Let $\{M_n\}_{n \in \mathbb{N}}$ be an exhaustion of $M$ satisfying the properties described in Corollary~\ref{contractible:cor}. 
For every 
$n\in\mathbb{N}$ let $W_n=M_n\setminus \inte(M_{n-1})$, where we understand that $M_{-1}=\emptyset$ (hence $W_0=M_0$). 
By Proposition~\ref{exhaustion}, 
 there exists $n_0 \in \mathbb{N}$ such that $\|W_n, \partial W_n\|=0$ for every $n>n_0$. Let us now fix $n>n_0$. Recall that $M_n$ and $M_{n-1}$ are solid tori.
 As a consequence, there exists a homeomorphism $\varphi\colon M_n\to X(K_2)$ between $M_n$ and the exterior of the unknot $K_2\subseteq S^3$. If $K_1\subseteq S^3$ is the image of the core of $M_{n-1}$ via $\varphi$, then $\varphi$ restricts to a homeomorphism between $W_n$ and $X(L)$, where $L=K_1\cup K_2\subseteq S^3$. 
 Since $M_0$  is not contained in any $3$-ball in $M$, the knot $K_1$ is not contained in any $3$-ball in $W_n$, and this implies that $L$ is not split. 
 Moreover, $\|X(L),\partial X(L)\|=\|W_n,\partial W_n\|=0$, thus Theorem~\ref{lkgraphlink} implies that $\lk(K_1,K_2)\neq 0$. This contradicts the fact that
the inclusion $M_{n-1}\hookrightarrow M_{n}$ induces the trivial map $\pi_1(M_{n-1})\to\pi_1(M_{n})$, and concludes the proof of the theorem.

\section{The higher dimensional case}\label{higher:sec}
In this section we prove Theorem~\ref{higher:dim:thm}, i.e.~we show that, if $n\geq 4$, then
there exists a smooth contractible $n$-manifold which is not homeomorphic to $\mathbb{R}^n$ and is such that $\|M\|=0$.

Let us first suppose $n=4$. It is shown in~\cite{Casson} that there exist Seifert fibered $3$-manifolds distinct from $S^3$ which bound
 Mazur $4$-manifolds. In other words, there exists a contractible, compact, smooth four-dimensional manifold $W$ 
such that $\partial W$ is a connected Seifert fibered $3$-manifold not homeomorphic to $S^3$.

Let $M=\inte(W)$. Using that $\partial W$ admits a collar in $W$ and that $\pi_1(\partial W)\neq \{1\}$,
it is immediate to check that $M$ is not simply connected at infinity, i.e.~that for every compact subset $K\subseteq M$ there exists a compact
subset $K'\subseteq M$ such that $K\subseteq K'$ and $M\setminus K'$ is not simply conneted. This implies in turn that $M$ cannot be homeomorphic to $\mathbb{R}^4$.
On the other hand, it is proved in~\cite{Loeh} that Seifert fibered manifolds are \emph{$\ell^1$-invisible}, and that the simplicial volume of the internal part of a compact manifold with $\ell^1$-invisible boundary is finite. As a consequence, we have $\|M\|<+\infty$. However, since $M$ is simply connected we have $\|M\|\in \{0,+\infty\}$ (see again~\cite{Loeh}),
hence $\|M\|=0$.

The proof in dimension $n\geq 5$ is similar but easier, since in  higher dimension it is (relatively) easier to construct smooth contractible manifolds which are bounded by non-simply connected
$\ell^1$-invisible manifolds. Let $\Gamma$ be the fundamental group of the Poincar\'e dodecahedral homology $3$-sphere. The group
$\Gamma$ has order 120, is perfect, and has deficiency 0. By~\cite[Theorem 1 and Remark at page 70]{Kerv}, for every $n\geq 4$ there exists a smooth
homology $n$-sphere $Z_n$ such that $\pi_1(Z_n)=\Gamma$. By~\cite[Theorem 3]{Kerv}, this implies in turn that for every $n\geq 5$
there exists a smooth contractible compact $n$-manifold $W_n$ such that $\partial W_n$ is connected and $\pi_1(\partial W_n)=\Gamma$. Since $\Gamma$ is finite, it is amenable,
hence $\partial W_n$ is $\ell^1$-invisible~\cite{Loeh}. Just as above, this implies  that $\|\inte(W_n)\|=0$.
Moreover, $\inte(W_n)$ is not simply connected at infinity, hence it is not homeomorphic to $\mathbb{R}^n$. This concludes the proof of Theorem~\ref{higher:dim:thm}.

\section{The spectrum of the simplicial volume for irreducible $3$-manifolds}\label{spectrum:sec}

In this section we prove Theorem~\ref{spectrum:thm}, which states that $$SV^{\text{lf}}_{\text{irr}}(3)\subseteq SV(3)\cup \{\infty\}\ .$$

By~\cite{KimKue}, if $N$ is a compact $3$-manifold  whose boundary is given by spheres and/or tori, then 
$\|\inte(N)\|=\|N,\partial N\|$, hence $\|N,\partial N\|\in SV^{\text{lf}}_{\text{tame}}(3)\subseteq SV(3)\cup \{\infty\}$. Therefore, it is sufficient to show that, if
$M$ is an open irreducible $3$-manifold with $\|M\|<+\infty$, then $\|M\|=\|N,\partial N\|$ for some compact $3$-manifold  $N$ whose boundary is given by spheres and/or tori.

Suppose that $M$ is an open irreducible $3$-manifold with $\|M\|<+\infty$. We may also suppose that $M$ is not homeomorphic to $\mathbb{R}^3$, otherwise
$\|M\|=0$ and we are done.
Let  $\{M_n\}_{n\in\mathbb{N}}$ be the exhaustion provided by Proposition~\ref{irreducible:prop}, and 
 for every 
$n\in\mathbb{N}$ let $W_n=M_n\setminus \inte{M_{n-1}}$, where as usual $M_{-1}=\emptyset$.
By Proposition~\ref{exhaustion}, there exists $n_0\in\mathbb{N}$ such that $\|W_n,\partial W_n\|=0$ for every $n>n_0$.  
Up to replacing  $M_{n}$ (resp.~$W_n$) with $M_{n_0+n}$ (resp~$W_{n_0+n}$) 
for every $n\in\mathbb{N}$, we may then assume that $\|W_n,\partial W_n\|=0$ for every $n>0$.
In order to conclude the proof, we will show that 
 $$ \|M\|= \|M_{0},\partial M_{0}\|\ .$$

We first show that $ \|M\|\leq \|M_{0},\partial M_{0}\|$.
Recall that, for every $n\in\mathbb{N}$, every component of $\partial M_n$ (hence, of $\partial W_n$) is homeomorphic to a torus.
Since the torus has an amenable fundamental group,
its second homology group satisfies
the uniform boundary condition~\cite{Matsu-Mor}, i.e.~there exists a constant $\theta>0$ such that, if 
$b\in C_2(S^1\times S^1)$ 
is a boundary, then there exists a chain
$c\in C_3(S^1\times S^1)$ such that $b=\partial c$ and $\|c\|_1\leq \theta\cdot \|b\|_1$. 

Let now $\varepsilon>0$ be given. 
Since the boundary components of $M_{0}$ are  tori, Gromov's Equivalence Theorem~\cite{Gromov, BBFIPP} implies that $M_{0}$ admits a fundamental
cycle $z_{0}\in C_3(M_{0},\partial M_{0})$ such that $\|z_{0}\|_1\leq \|M_{0},\partial M_{0}\|+\varepsilon/4$ and $\|\partial z_{0}\|\leq \varepsilon/(4\theta)$.
Moreover, Lemma~\ref{general1:lemma} implies that, for every $n>0$, the compact manifold $W_n$ admits a fundamental cycle $z_n\in C_3(W_n,\partial W_n)$ such that 
$\|z_n\|\leq \varepsilon/2^{n+2}$ and $\|\partial z_n\|\leq \varepsilon / (2^{n+3}\theta)$.  

Observe now that, for every $n\in\mathbb{N}$, we have $$\partial W_n=(\partial W_n\cap \partial W_{n-1})\cup (\partial W_n\cap \partial W_{n+1})=\partial M_{n-1}\cup \partial M_n\ .$$
(Recall that $M_{-1}=W_{-1}=\emptyset$, and $M_0=W_0$). 
For every $n\in\mathbb{N}$ we write $\partial z_n=c_n+d_n$, where $c_n$ is a sum of fundamental cycles of the components of $\partial M_{n-1}$, and
$d_n$ is a sum of fundamental cycles of the components of $\partial M_{n}$ (hence, $\partial z_0=d_0$ and $c_0=0$). Of course we have $\|\partial z_n\|_1=\|c_n\|_1+\|d_n\|_1$.
Since $W_n$ and $W_{n+1}$ induce
opposite orientations on $\partial M_n$, we have that $d_n+c_{n+1}$ is a boundary in $C_2(\partial M_n)$. Therefore, there exists $h_n\in C_3(\partial M_n)$ such that $\partial h_n= c_{n+1}+d_n$
and $$\|h_n\|_1\leq \theta(\|d_n+c_{n+1}\|_1)\leq \theta(\|d_n\|_1+\|c_{n+1}\|_1)\ .$$
Let now
$$
z=\sum_{n=0}^\infty z_n - \sum_{n=0}^\infty h_n\ .
$$
Being a locally finite sum of finite chains, $z$ is a locally finite chain. Moreover, 
$$
\partial z=\sum_{n=0}^\infty \partial z_n - \sum_{n=0}^\infty \partial h_n=\sum_{n=0}^\infty (c_n+d_n) - \sum_{n=0}^\infty (c_{n+1}+d_n)=c_0=0\ ,
$$
hence $z$ is a cycle. It is immediate to realize that $z$ is in fact a fundamental cycle for $M$. 
Finally, we have 
\begin{align*}
\|z\|_1&\leq \sum_{n=0}^\infty \|z_n\|_1 + \sum_{n=0}^\infty \|h_n\|_1\leq \|z_0\|+\sum_{n=0}^\infty \frac{\varepsilon}{2^{n+2}} + \sum_{n=0}^\infty \theta(\|c_{n+1}+d_n\|_1)\\
& \leq \left(\|M_{0},\partial M_{0}\|+\frac{\varepsilon}{4}\right)+\frac{\varepsilon}{2}+\theta\sum_{n=0}^{\infty} (\|c_n\|_1 + \|d_n\|_1)\\
& \leq \|M_{0},\partial M_{0}\|+\frac{3}{4}\varepsilon +\theta\sum_{n=0}^\infty \|\partial z_n\|_1\\
& \leq \|M_{0},\partial M_{0}\|+\frac{3}{4}\varepsilon +\theta\sum_{n=0}^\infty \frac{\varepsilon}{\theta 2^{n+3}}\\
& \leq \|M_{0},\partial M_{0}\| +\varepsilon\ .
\end{align*}
Due to the arbitrariness of $\varepsilon$, this implies that $\|M\|\leq  \|M_{0},\partial M_{0}\|$. 

In order to conclude, we now need to show that $\|M\|\geq  \|M_{0},\partial M_{0}\|$. If $M_0$ is a solid torus, then
$\|M_0,\partial M_0\|=0$, and there is nothing to prove. By Proposition~\ref{irreducible:prop}, we may then assume
that $\partial M_0$ is a finite union of incompressible tori. We can then argue as in the proof of Proposition~\ref{exhaustion}.
Let $z$ be a locally finite fundamental cycle for $M$, and let $z_0=z|_{M_0}$. Then $\partial z_0$ is supported in $M\setminus M_0$, and if
$z'_0$ 
is the image of $z_{0}$ under the quotient map $C_3(M)\to C_3(M, M\setminus\inte(M_0))$, then
$z'_{0}$ defines a class $[z'_{0}]\in H_3(M, M\setminus\inte(M_0))$. 

Let now $\psi\colon H_3(M_0,\partial M_0)\to H_3(M, M\setminus\inte(M_0))$ be the map induced by the inclusion. Then
$\psi_3([M_0,\partial M_0])=[z'_{0}]$. Since $\partial M_0$ is incompressible, Proposition~\ref{amenable:prop} (applied to the submanifold
$M_0$ of the ambient manifold $M$) now implies that the map $\psi_3$ is an isometric isomorphism, hence 
$$
\|z_{0}\|_1\geq \|z'_{0}\|_1\geq \|\psi([M_0,\partial M_0])\|_1=\|[M_0,\partial M_0]\|_1=\|M_0,\partial M_0\|\ .
$$
This concludes the proof of Theorem~\ref{spectrum:thm}.

\bibliography{biblio_contractible}
\bibliographystyle{alpha}

\end{document}